\documentclass{article}
\usepackage[utf8]{inputenc}
\usepackage[small,compact]{titlesec}
\usepackage{amsmath,amssymb,amsfonts,mathabx,mathrsfs,setspace}
\usepackage{graphicx,caption,epsfig,subfigure,epsfig,wrapfig,sidecap}
\usepackage{url,color,verbatim,algorithmicx}
\usepackage[sort,compress]{cite}
\usepackage{soul}
\usepackage{bm,float}
\usepackage[top=1in,bottom=1in,left=1in,right=1in]{geometry}
\usepackage{multirow, makecell}
\usepackage[ruled,boxed]{algorithm}
\usepackage[scaled]{helvet}
\usepackage[T1]{fontenc}
\usepackage[bookmarks=true, bookmarksnumbered=true, colorlinks=true,   pdfstartview=FitV,
linkcolor=blue, citecolor=blue, urlcolor=blue]{hyperref}
\usepackage{multirow, multicol,makecell, colortbl, hhline}
\usepackage{booktabs} 
\usepackage{enumitem} 
\usepackage{etoc}          

\def\Gap{\mathrm{Gap}}
\def\deltat{\delta}

\usepackage[noend]{algpseudocode}

\newcommand{\argmin}[1]{\underset{#1}{\operatorname{arg}\operatorname{min}}\;}

\newtheorem{theorem}{Theorem}
\newtheorem{assumption}[theorem]{Assumption}

\newtheorem{proposition}[theorem]{Proposition}
\newtheorem{remark}[theorem]{Remark}
\newenvironment{proof}[1][Proof]{\noindent\textbf{#1.} }{\ \rule{0.5em}{0.5em}}

\numberwithin{equation}{section}
\numberwithin{theorem}{section}

\def\calE{\mathcal{E}}
\newcommand{\prob}[1]{\mathbb{P}\left(#1\right)}

\newcommand{\R}{\mathbb{R}}
{}

\newcommand{\mcE}{\mathcal{E}}

\newcommand{\mcF}{\mathcal{F}}

\usepackage{xcolor}

\def \Xb{\mathbf{X}}

\def \E{\mathbb{E}}

 \def\mW{{\bm{W}}}

\def\mX{{\bf X}}     
\def\mY{{\bf Y}}

\def\mJ{{\bf J}}

\def\my{{\bf y}}

\newenvironment{rmat}{\left[\begin{array}{rrrrrrrrrrrrr}}{\end{array}\right]}
\newcommand\brm{\begin{rmat}}
\newcommand\erm{\end{rmat}}
\newenvironment{cmat}{\left[\begin{array}{ccccccccc}}{\end{array}\right]}
\newcommand\bcm{\begin{cmat}}
\newcommand\ecm{\end{cmat}}

\newcommand{\mbf}[1]{\bm{#1}}

\title{NySALT: Nystr\"{o}m-type inference-based schemes adaptive to large time-stepping
\author{Xingjie Li, Fei Lu, Molei Tao, Felix Ye }
}

\begin{document}
\maketitle
\begin{abstract}
Large time-stepping is important for efficient long-time simulations of deterministic and stochastic Hamiltonian dynamical systems. Conventional structure-preserving integrators, while being successful for generic systems, have limited tolerance to time step size due to stability and accuracy constraints.   
We propose to use data to innovate classical integrators so that they can be adaptive to large time-stepping and are tailored to each specific system. In particular, we introduce 
NySALT, Nystr\"{o}m-type inference-based schemes adaptive to large time-stepping. The NySALT has optimal parameters for each time step learnt from data by minimizing the one-step prediction error. Thus, it is tailored for each time step size and the specific system to achieve optimal performance and tolerate large time-stepping in an adaptive fashion. We prove and numerically verify the convergence of the estimators as data size increases. Furthermore, analysis and numerical tests on the deterministic and stochastic Fermi-Pasta-Ulam (FPU) models show that NySALT enlarges the maximal admissible step size of linear stability, and quadruples the time step size of the St\"{o}rmer--Verlet and the BAOAB 
when maintaining similar levels of accuracy. 
\end{abstract}
\vspace{-2 mm}
\paragraph{Keywords:} symplectic integrator, Hamiltonian system, Langevin dynamics, inference-based scheme, model reduction, Fermi-Pasta-Ulam models
\vspace{- 2mm}
\tableofcontents
\section{Introduction}
Efficient simulations of Hamiltonian dynamical systems and their stochastic generalizations play an essential role in many applications where the goal is to capture and predict both short time and long time dynamics. 
Conventional structure-preserving integrators have achieved tremendous success in 
 preserving structures such as symplecticity, reversibility, manifold structure, other physical constraints, and even statistical properties in long-time simulations (see \cite{sanz-serna_1992,calvo1994numerical, ASCHER1997,marsden_west_2001,DaSilva2003,Leimkuhler2004,Hairer2006,Tao2010,bou2010long, ObTa13, tao2016explicit, tao2016explicitJCP, blanes2017concise, tao2020variational, chen2021grit, tao2022accurate, leimkuhler2013rational, Feng2010_book,Hong2006,Hong2019_book} and the references therein). Due to their remarkable suitability for Hamiltonian systems, this article will focus on symplectic integrators.
 
Meanwhile, the time step of generic (symplectic) integrators are often limited by the stiffness of these systems, making the simulation computationally costly. Also, the conventional integrators aim for generic systems, not taking into account each specific Hamiltonian. We aim at using data to construct  \emph{large time-stepping integrators} 
 that can tolerate large time steps while maintaining symplecticity, stability, and accuracy, and more importantly, \emph{tailored to each specific Hamiltonian} in an automatic fashion.

It is a fast growing research area to leverage information from data and combine statistical tools 
with traditional scientific computing methodology.  
This work is under this umbrella as we utilize statistical learning tools to approximate a discrete-time flow map and then construct large time-stepping integrators.

In this paper, we propose to construct large time-stepping integrators by inferring optimal parameters of classical structure-preserving integrators in a flow map approximation framework. The inferred schemes are adaptive to the time-step size, thus they can have large time-stepping while maintaining stability and accuracy. Furthermore, the parameters are low-dimensional and can be learned from limited data consisting of short trajectories, and the estimator converges as the data size increases under suitable conditions. Consequently, the inferred integrators are robust and generalizable beyond the training set (see Section \ref{sec:framework}). A few work of similar spirit can be found in  \cite{SOTA_Tao2021,zheng2021learning,liu2022seven}, 
where neural network based approximations lead to large time-stepping integrators. 
  However, the neural networks are computationally expensive to train and their parameters are often sensitive to training data, making it difficult to systematically investigate its properties such as the maximal admissible time step size of stability. 
Besides, most designs of neural networks are disconnected from the classical numerical integrators.

For benchmark application, we focus on parametric integrators in the Nystr\"{o}m family (see descriptions from e.g., \cite{Hairer2006}), which includes the popularly used St\"{o}rmer--Verlet method. From observed data, we then construct the NySALT scheme: Nystr\"{o}m-type inference-based scheme adaptive to large time-stepping (NySALT). The NySALT ensures optimal parameters for each time step 
by minimizing the one-step prediction error learnt from data. {Linear stability analysis is} also established to verify our premise, namely that the optimal parameters indeed should be different from those of the St\"{o}rmer--Verlet, and the resulting NySALT has a larger maximal admissible step size for linear stability (see Section \ref{sec:linearAnalysis}). We examine the performance of NySALT on the widely-used benchmark stiff nonlinear systems: a deterministic Fermi-Pasta-Ulam (FPU) model, as well as its stochastic (Langevin) generalization.  Numerical results show that the inference of NySALT is robust: the estimators are independent of the fine data generators, they converge as the number of trajectories (size of observed data) increases, and they stabilize very fast (within a dozens of short trajectories). 
It also shows the NySALT is accurate: it is adaptive to large time step size, and improves the accuracy of trajectories in multiple time scales and statistics in long time scale. 
Lastly, the NySALT is efficient: it enlarges the admissible time step size of stability of the classical schemes such as the St\"{o}rmer--Verlet and the BAOAB methods \cite{Leimkuhler2004,Hairer2006,leimkuhler2013rational} (see Section \ref{sec:numFPU}) and significantly reduces the simulation time.

Our main contributions are threefold. \vspace{-2mm}
\begin{itemize}\setlength\itemsep{-0mm}
\item We propose to infer the large time-stepping and structure-preserving integrator from data in a flow-map approximation framework, in which we select optimal parameters in a family of classical geometric numerical integrators by minimizing the flow map approximation error.  If we choose the Nystr\"{o}m family, it is NySALT scheme.
\item  The inference procedure of NySALT is robust and the scheme is generalizable beyond the training set.   
\item Analysis and numerical tests show that NySALT scheme is efficient and accurate with large time step size.
\end{itemize}
Meanwhile, many work that employs data-driven approaches in the past has tackled parts of our goal, but not all of them. These related work are usually categorized based on their models or methods and we summarize them here:
\begin{itemize}
    \item { \textbf {Learning large time-stepping integrators.}} 
It is an emerging research direction to learn large time-stepping integrators from data. When the system is known, MDNet based on graph neural network in \cite{zheng2021learning} enables the simulation of microcanonical (i.e. Hamiltonian) molecular dynamics with large steps; a stochastic collocation method in \cite{liu2022seven} and a parametric inference in \cite{ISALT2022} have lead to large time-stepping integrators for SDEs. This study extends the parametric inference approach in \cite{ISALT2022} to Hamiltonian systems. When the system is unknown, generating function neural network (GFNN) in \cite{SOTA_Tao2021} learns  
symplectic maps and proves a significant benefit of doing so, namely a linearly growing bound of long time prediction error.

\item {\textbf {Learning the Hamiltonian or the system.} }  
A very active research area is to recover the dynamics that generate observed, discrete time-series data, and then use the learned dynamics to predict further evolutions. Examples of existing work for Hamiltonian systems include 
\cite{greydanus2019hamiltonian, bertalan2019learning, chen2019symplectic,lutter2018deep,toth2019hamiltonian,zhong2019symplectic,jin2020sympnets,xiong2021nonseparable,SOTA_Tao2021,valperga2022learning}, and while early seminal work learned Hamiltonian vector fields without truly preserving symplecticity, later results leveraged various tools including symplectic integrator \cite{chen2019symplectic}, composition of triangular maps \cite{jin2020sympnets}, and generating function \cite{SOTA_Tao2021} to fix this imperfection. 
The setup of all these research, however, assumes that the governing dynamics is unknown (i.e. `latent' in machine learning terminology), which is different from our setup as we instead seek a good numerical integrator for given Hamiltonian.

\item {\textbf {Integrators of multiscale Hamiltonian systems.}}
There have been remarkable progress in generic upscaled integration of stiff and multiscale ODE systems (e.g., 
\cite{kevrekidis2004equation,weinan2007heterogeneous, abdulle2012heterogeneous, Ariel:08, calvo2010heterogeneous, Tao2010,Jin2022}) 
and despite that fewer results exist when it comes to generic multiscale symplectic integrators (e.g., \cite{Tao2010}), multiscale symplectic integrators for specific classes of problems have also been constructed (e.g., \cite{Grubmuller:91, Tuckerman:92, Skeel:99, LeBris:07, DoLeLe10, MR1490094, SIM2}).
While each of these integrators is tremendously useful for a specific class of systems, a complete re-design is likely necessary when a system is outside of the class. Our integrator, in contrast, is tailored to each specific Hamiltonian automatically as the outcome of the inference procedure.

\item {\textbf{Model reduction and time series modeling.}} Large time-stepping schemes can also be viewed as a model reduction in time for the differential equations (DEs).  A more challenging task is model reduction in both space-time, i.e., reducing the spatial dimension and integrating with large time-steps, for high-dimensional DEs or PDEs. This is an extremely active research area (see e.g., \cite{weinan2007heterogeneous,KMK03,legoll2010effective,MH13,CL15,leiDatadrivenParameterization2016,hudson2020coarse,Lu20Burgers,snyder2022reduced,li2017computing} and the references therein for a small sample of the important works). While it is impossible to review all important works, we mention the proper symplectic decomposition with Galerkin projection methods for Hamiltonian systems \cite{Peng2016,Hesthaven2017,Buchfink19}; the time series approaches (see e.g., \cite{KCG15,LLC17,LinLu21}) and the deep learning methods that solve PDEs (see e.g., \cite{bar2019learning,ma2018model}).

\end{itemize}

\section{Hamiltonian systems and parametric symplectic integrators}
We briefly review a few preliminary concepts of Hamiltonian systems and symplectic integrators. The classical symplectic integrators are designed as universal methods for all systems and they require a small time-step size to be accurate. To tolerate large time-step sizes, we will infer adaptive symplectic integrators from data, thus, we focus on symplectic schemes with parameters, which can be optimized by statistical inference from data.

\subsection{Hamiltonian systems and symplectic maps}
Let $H(\mbf{p},\mbf{q})$ be a Hamiltonian function on $\R^d \times \R^d$, where $\mbf{p}$ is the momentum and $\mbf{q}$ denotes the position. Consider the Hamiltonian ODE system
\begin{equation}\label{eq:Hamil_ode}
\begin{aligned}
\begin{cases}
d\mbf{q}(t)=\frac{\partial H}{\partial \mbf{p}}dt,\\
d\mbf{p}(t)=\frac{-\partial H}{\partial \mbf{q}}dt,
\end{cases}
\end{aligned}
\end{equation}
Let $\mX= (\mbf{q},\mbf{p})$ and let $\mJ = \left(\begin{array}{cc}\mbf{0}&\mbf{I}\\ -\mbf{I} & \mbf{0}\end{array} \right)$ be a $2d\times 2d$ matrix with $\mbf{0}$ and $\mbf{I}$ being $d\times d$ block matrices.  
We can write the Hamiltonian system as $
\mX(t) = \mJ \nabla_\mX H dt$. 
 A \emph{symplectic map} is a differentiable map $\phi:\R^{2d}\to \R^{2d}$ whose Jacobian matrix $\nabla \phi$ satisfies 
$\nabla \phi(\mX)^\top \mbf{J} \nabla \phi(\mX) = \mJ, \quad  \forall \mX \in \R^{2d}$.

Hamiltonian systems have a characteristic property: the flow of a Hamiltonian system is a symplectic map. More precisely, 
let $f:U\to \R^{2d}$ be a continuous differential function from an open set $U\subset\R^{2d}$. Then, $d\my=f(\my)dt$ is locally Hamiltonian if and only if its flow $\phi_t(\my)$ is symplectic for all $\my\in U$ and for all sufficiently small $t$ (\cite[Theorem 2.6, page 185]{Hairer2006}).
Furthermore, the flow is symplectic for any $t$ if the ODE is Hamiltonian as long as the flow is well posed (see e.g., \cite{abraham2008foundations, arnol2013mathematical})

This property is the starting point of our data-driven construction of numerical flows: we seek flows that are symplectic and are described by parameters to be estimated from data. 
There are various parametric families of symplectic integrators (see \cite{Hairer2006}). We consider in this study the Nystr\"{o}m family, which provides an explicit time integrator with two parameters. In particular, the widely-used St\"ormer--Verlet method is a member in this family.

In this study, we focus on the systems with separable Hamiltonian $H=K(\mbf{p})+V(\mbf{q})$ with $K(\mbf{p}) = \frac{1}{2}\|\mbf{p}\|^2$ and $V(\mbf{q})$ satisfying $g(\mbf{q}) =- \nabla V(\mbf{q})$, i.e., systems in the form
\begin{equation}\label{eq:Hamil_pq}
\begin{aligned}
\begin{cases}
d\mbf{q}(t)=\mbf{p}dt,\\
d\mbf{p}(t)=g(\mbf{q})dt . 
\end{cases}
\end{aligned}
\end{equation}
We will also consider damped stochastic Hamiltonian systems, which is also called Langevin dynamics:
\begin{equation}\label{eq:Hamil_pqstoc}
\begin{aligned}
\begin{cases}
d\mbf{q}(t)
=\mbf{p} dt,\\
d\mbf{p}(t)=(g(\mbf{q})  - \gamma \mbf{p})dt + \sigma d{\mbf{W}_t},
\end{cases}
\end{aligned}
\end{equation}
where $\gamma$ represents the friction coefficient, and $\mbf{W}_t$ is a standard (multi-dimensional) Wiener process.

\subsection{Deterministic Symplectic Nystr\"om scheme} \label{sec:DetNstrom}
Firstly, let us recall the $s$-step Nystr\"{o}m method for the second order differential equation \eqref{eq:Hamil_pq} following the notations from \cite{Hairer2006}. The Nystr\"{o}m method advances the dynamics from $t_0$ to $t_1$ with step size $h$ as 
\begin{align}\label{general_Nystrom}
\begin{cases}
\ell_{i}& = g\big(\mbf{q}_n+c_i h\mbf{p}_n+h^2\sum_{j=1}^{s}a_{ij}\ell_j \big), \quad i=1,\dots,s,\\
 \mbf{q}_{n+1} &= \mbf{q}_n +h\mbf{p}_n+h^2 \sum_{i=1}^{s} \beta_i\ell_i,\\
\mbf{p}_{n+1} &= \mbf{p}_{n} +h\sum_{i=1}^{s}b_i \ell_i,
\end{cases}
\end{align}
where $\{a_{ij}\}_{i,j=1}^s$, $\{c_i\}_{i=1}^{s}$, $\{\beta_i\}_{i=1}^{s}$ and $\{b_i\}_{i=1}^{s}$ are parameters to be specified.  
To have an explicit scheme, it requires that $\ell_i$ only depends on $\ell_j$ with $j<i$, hence $a_{ij}=0$ when $j\ge i$. 

We focus on the explicit $2$-step Nystr\"{o}m methods, which includes the widely-used St\"{o}rmer--Verlet method \cite{Hairer2006}.  We denote it by $S_{b_1,\beta_1}^h$: 
\begin{equation}\label{eq:nystrom2}
 \begin{aligned}
\left[   \begin{array}{c}
\mbf{q}_{n+1} \\
\mbf{p}_{n+1}
  \end{array}   \right]
& = S_{b_1,\beta_1}^h\left( \left[ \begin{array}{c}
\mbf{q}_{n} \\
\mbf{p}_{n}
  \end{array} \right]\right) 
= \left[\begin{array}{c}
  \mbf{q}_n+h \mbf{p}_n+h^2 (\beta_1\ell_1  +\beta_2\ell_2), \\
  \mbf{p}_n+h (b_1 \ell_1+b_2\ell_2)
  \end{array}      \right].
  \end{aligned}
\end{equation}
where
\begin{equation}\label{eq:l1l2n}
 \begin{aligned}
\ell_1 = g(\mbf{q}_n+ h c_1 \mbf{p}_n )  \quad \text{and} \quad \ell_2 = g(\mbf{q}_n+ h c_2 \mbf{p}_n + h^2 a_{21}\ell_1).  \end{aligned}
\end{equation}

Meanwhile,  by applying Taylor expansion to $\mbf{q}_n$ and $\mbf{p}_n$ in the second and third equations in \eqref{general_Nystrom} at $t_0$, we get consistency constraints on parameters $\{\beta_i\}$ and $\{b_i\}$:
\begin{equation}\label{Nystrom_constraint_eq1}
    \sum_{i=1}^{2}\beta_i=\frac{1}{2}\quad \text{and}\quad     \sum_{i=1}^{2}b_i=1.
\end{equation}
Notice that the general Nystr\"{o}m is not necessarily a structure-preserving scheme, so we need additional constraints on parameters to possess the symplectic property. From [Theorem 2.5 in Chapter IV \cite{Hairer2006}], a sufficient condition is:
\begin{equation}\label{Nystrom_constraint_eq2}
    \begin{split}
    \beta_i&= b_i (1-c_i)\quad \text{for}\quad i=1,2, \\
    b_i(\beta_j-a_{ij})&=b_j(\beta_i-a_{ji}) \quad \text{for}\quad i,j=1,2. 
    \end{split}
\end{equation}
Combining the constraints $a_{ij}=0$ for $j\ge i$, \eqref{Nystrom_constraint_eq1}--\eqref{Nystrom_constraint_eq2}, we have the following conditions on parameters to have an explicit and symplectic $2$-step Nystr\"{o}m scheme: 
\begin{equation}\label{Nystrom_constraint_summary}
    \begin{split}
        &\text{free parameters:}\quad 0<b_1<1,\; 0\le \beta_1\le \frac{1}{2},\\
        &\quad b_1+b_2=1,\quad \beta_{1}+\beta_2=\frac{1}{2},  \qquad c_i=1-\frac{\beta_i}{b_i}\quad \text{for}\quad i=1,2,\\
        &\quad a_{11}=a_{12}=a_{22}=0,\quad a_{21}=b_1(c_2-c_1).
            \end{split}
\end{equation} 
Notice that the well-known St\"{o}rmer--Verlet method belongs to this category with $b_1=1/2$ and $\beta_1=1/2$.
The above explicit and symplectic $2$-step Nystr\"{o}m integrator is of second order accuracy $O(h^2)$. Our NySALT scheme is the symplectic $2$-step Nystr\"{o}m integrator with the optimal parameters $b_1^*$ and $\beta_1^*$, which are learnt from data by minimizing the one-step prediction error (details see Section \ref{sec:flowDet}). 

\paragraph{Limited time step size of a classical numerical integrator.}
A major efficiency bottleneck of the majority of explicit numerical integrators is the limited time step size when the system is stiff.
As an illustration, we consider the the St{\"{o}}rmer--Verlet method and our benchmark example, the FPU model \eqref{eq:FPU-Hamiltonian}. By only considering the quadratic terms (i.e., the stiff harmonic oscillators) in the Hamiltonian, we obtain a linear stability condition on $h$ of this numerical integrator: $|\frac{h\omega }{2}|<1$, or equivalently $h<2/\omega$. Importantly, the accuracy of the integrator deteriorates quickly as $h$ increases, even when it is just half of the stability constraint. 
Figure~\ref{Fig:FPU_linear_stable} demonstrates this issue of the St{\"{o}}rmer--Verlet method, using the FPU model with $m=3$ and $\omega=50$ in the time interval  $[0,\, 500]$. It show the trajectories of the stiff energies $(I_j, j=1,\ldots, 3)$ and the total stiff energy $I$ defined in \eqref{eq:stiff_energy} from the St{\"{o}}rmer--Verlet integrator with two time step sizes, fine time step size $h=1\mathrm{e}{-4}$ and coarse time step size $\delta =0.02$, which is 200 times of fine time step size. Clearly, the method becomes inaccurate when $\delta=0.02$, which is still within the linear stability region $\delta<2/\omega=0.04$. In comparison, our NySALT scheme with the coarse time step size still performs as accurate as the one with fine step size. Particularly, the total stiff energy $I$ are well conserved over long time interval. More detailed analysis is shown in Section \ref{sec:detFPU_num}.

\begin{figure}[htp!]
    \centering
    \subfigure[$h=1\mathrm{e}{-4}$ St{\"{o}}rmer--Verlet]{\includegraphics[width =0.32\textwidth]{./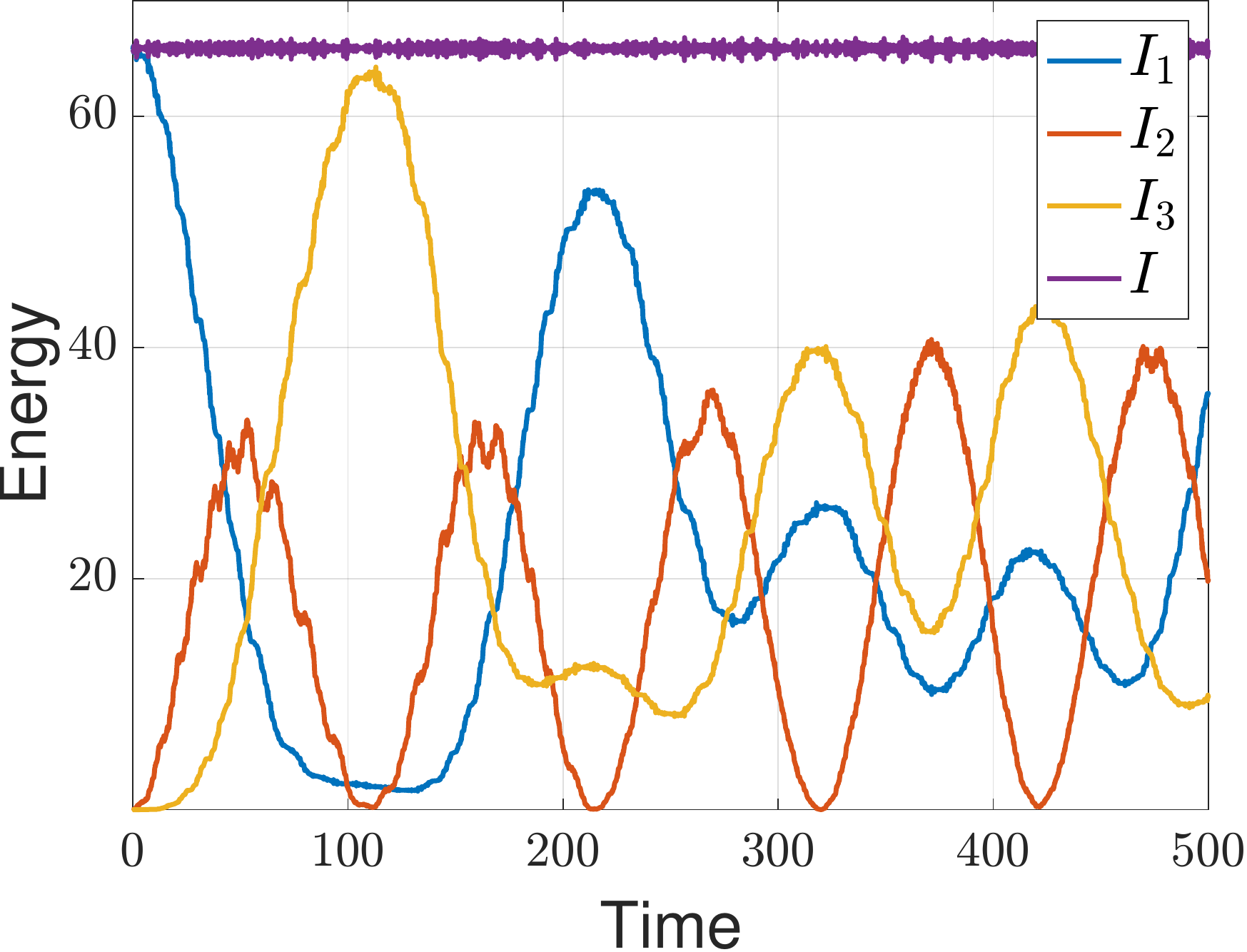}}
    \subfigure[$\delta=0.02$ St{\"{o}}rmer--Verlet]{\includegraphics[width =0.32\textwidth]{./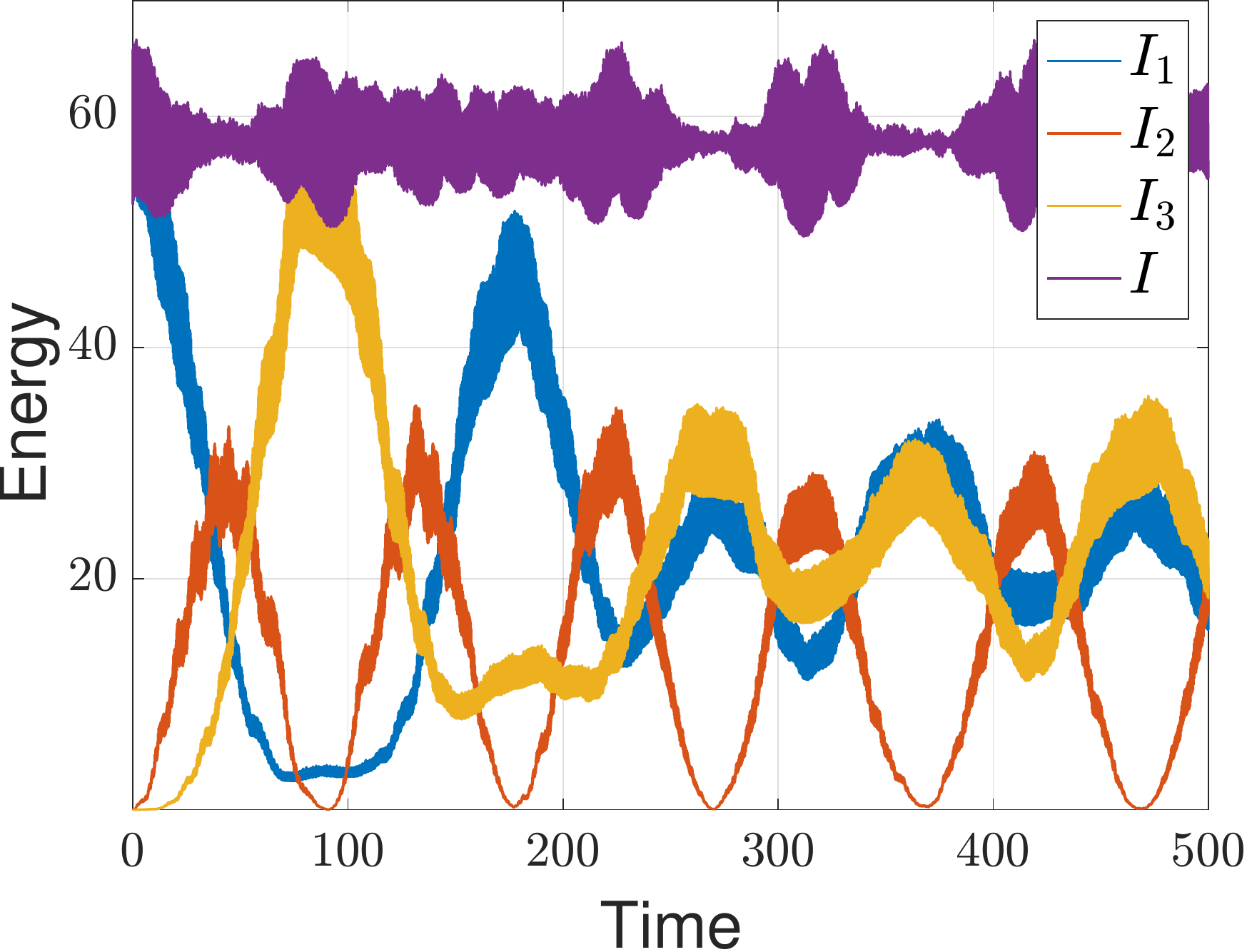}}
    \subfigure[$\delta=0.02$ NySALT]{\includegraphics[width =0.32\textwidth]{./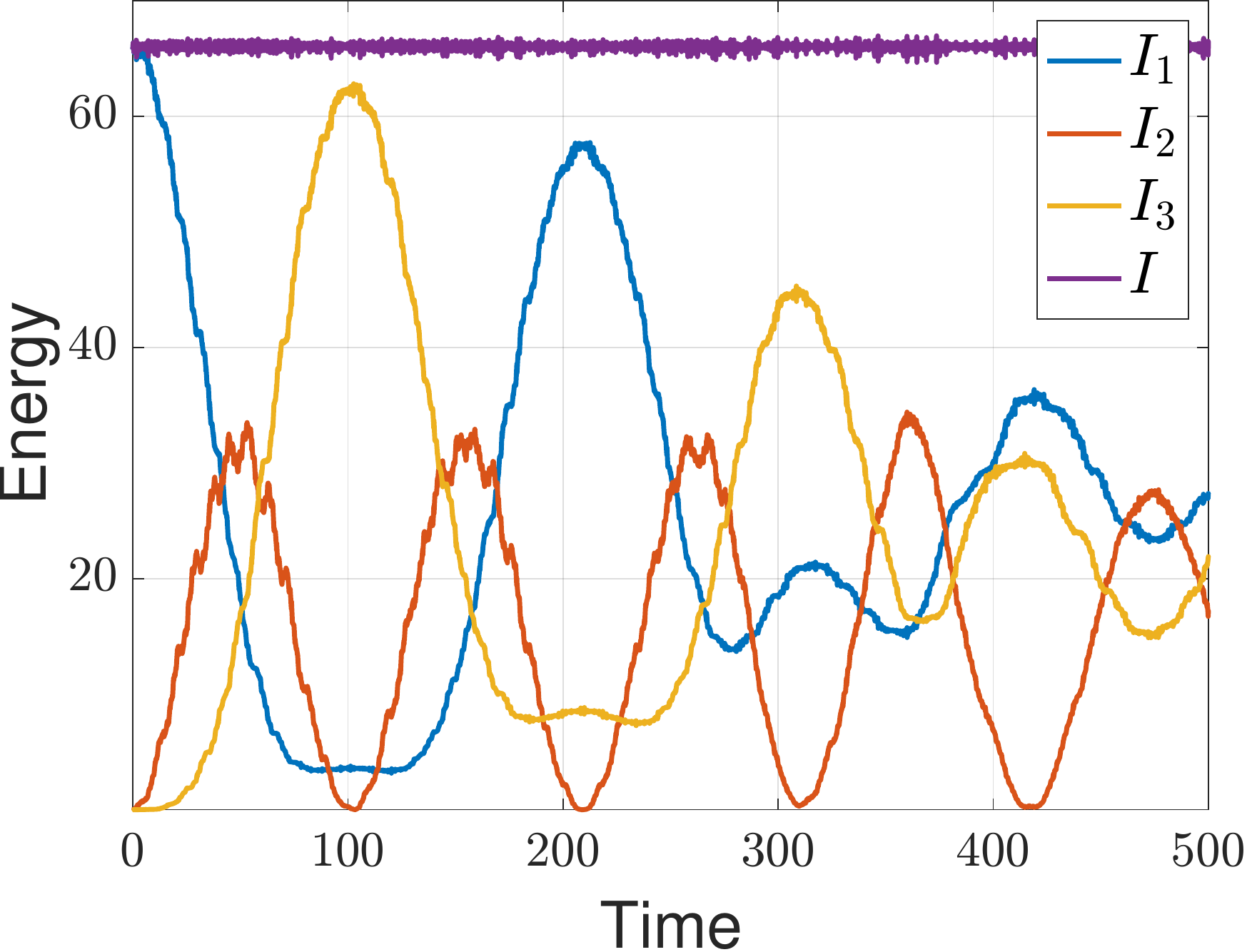}}
    \caption{Trajectories of each energy $I_j$ as well as the total stiff energy $I$ \eqref{eq:stiff_energy} of the deterministic FPU \eqref{eq:FPU-Hamiltonian} with $m=3$ and $\omega=50$, computed by the St{\"{o}}rmer--Verlet scheme with time step sizes $h=1\mathrm{e}{-4}$ in (a) and $\delta=0.02$ in (b), and by the NySALT scheme with $\delta = 0.02$ in (c). 
    }\label{Fig:FPU_linear_stable}
\end{figure}

\subsection{Stochastic Symplectic Nystr\"{o}m scheme}
To have a parametric scheme for the Langevin dynamics, we introduce a new splitting scheme by combining the Nystr{\"{o}}m integrator with an Ornstein–Uhlenbeck process, and we call it a stochastic Symplectic Nystr{\"{o}}m  scheme.  This scheme is the splitting methods (e.g. \cite{mclachlan2002splitting,bou2010long}) and similar to the BAOAB and ABOBA schemes (e.g., \cite{leimkuhler2013rational,Xiaocheng2021}). We break the Langevin dynamics into two pieces, the Hamiltonian part and the Ornstein–Uhlenbeck (OU) process part. 
\begin{equation}\label{eq:splitting}
\begin{aligned}
\bcm d\mbf{q}(t) \\ d\mbf{p}(t) \ecm 
= \bcm \mbf{p} \\ g(\mbf{q}) \ecm dt + \bcm 0 \\ -\gamma \mbf{p} dt + \sigma d{\mbf{W}_t} \ecm
\end{aligned}
\end{equation}
Each of them are solved separately as follows: 
it combines the symplectic Nystr{\"{o}}m approximation of the Hamiltonian contribution and an Ornstein–Uhlenbeck (OU) integrator approximation of the friction and thermal diffusion of the system. Given a time step $h$, this scheme reads  
\begin{equation}\label{eq:SO_def}
\begin{aligned}
\text{Deterministic Symplectic Nystr\"om scheme: }& \left[
 \begin{array}{c}
 \mbf{q}_{n+1}\\  {\tilde{\mbf{p}}}_{n+1}
 \end{array}\right]  =  S^{h}_{b_1,\beta_1}\left(\left[
 \begin{array}{c}
 \mbf{q}_{n}\\  \mbf{p}_{n}
 \end{array}\right]\right)\\
\text{Ornstein–Uhlenbeck integrator: }&{\mbf{p}}_{n+1} =  \exp(-\gamma h){\tilde{
\mbf{p}}}_{n+1} + \xi_n, 
\end{aligned}
\end{equation}
where $\{\xi_n\}$ is a sequence of independent identically distributed Gaussian vectors with distribution $\mathcal{N}(0, \frac{\sigma^2}{2\gamma} (1- e^{-2\gamma h}  ) \mbf{I}_d)$. The symplectic integrator $S^{h}_{b_1,\beta_1}$ is the 2-step Nystr\"om integrator in \eqref{eq:nystrom2}, thus it preserves the Hamiltonian contribution in the stochastic system and enhances the numerical stability. The second part for the stochastic force is based on the exact solution of the OU process and it leads to the local error of order $O(h^{1.5})$ in $\mathbf{q}$. Similar to the deterministic case, this  stochastic Symplectic Nystr{\"{o}}m  scheme is a family of numerical schemes and  
our NySALT scheme is the one with the optimal parameters $b_1^*$ and $\beta_1^*$, which are learnt from data by minimizing the one-step prediction error (details see Section \ref{sec:flowSto}).  
Thus, the NySALT scheme is of local strong order $h^{1.5}$ (see Remark \ref{rmk:orderSNO} for a derivation for the linear system and we refer to for instance \cite{telatovich2017strong} for a thorough study on the strong order of splitting schemes for Langevin dynamics.)

\paragraph{Limited time step size of a classical numerical integrator.} Similar to the deterministic systems, numerical integrators for stochastic systems can tolerate limited time step size. To demonstrate it, we consider Langevin dynamics with FPU potential and we choose the friction coefficient $\gamma =0.01$ and diffusion coefficient $\sigma=0.05$. The total energy $I$ in this example is stochastic, so we calculate its time auto-covariance function (ACF). Figure~\ref{Fig:SFPU_gap200} shows the ACF computed by BAOAB scheme\cite{leimkuhler2013rational}, one of the state-of-the-art sympletic integrator, in comparison with our NySALT scheme , both using the coarse step size $\delta = 0.02$. The reference is computed by BAOAB with fine step sizes $h=1\mathrm{e}{-4}$. As can be seen, the BAOAB scheme produces inaccurate ACF, while the NySALT scheme remains very reliable. More detailed analysis is in Section \ref{sec:stocFPU_num}. 

\begin{figure}[htp!]
    \centering
    \includegraphics[width =0.45\textwidth]{./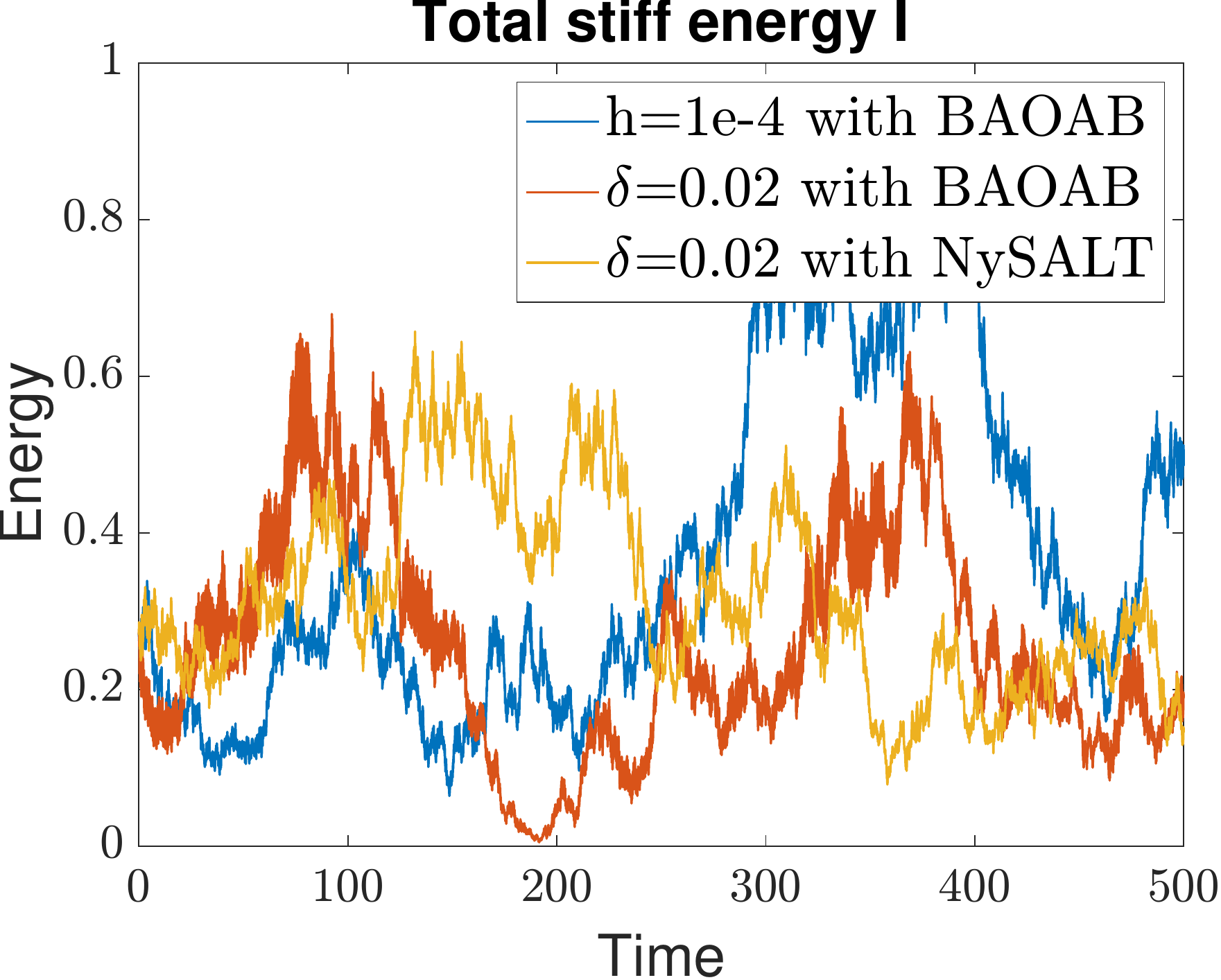}
    \includegraphics[width =0.44\textwidth]{./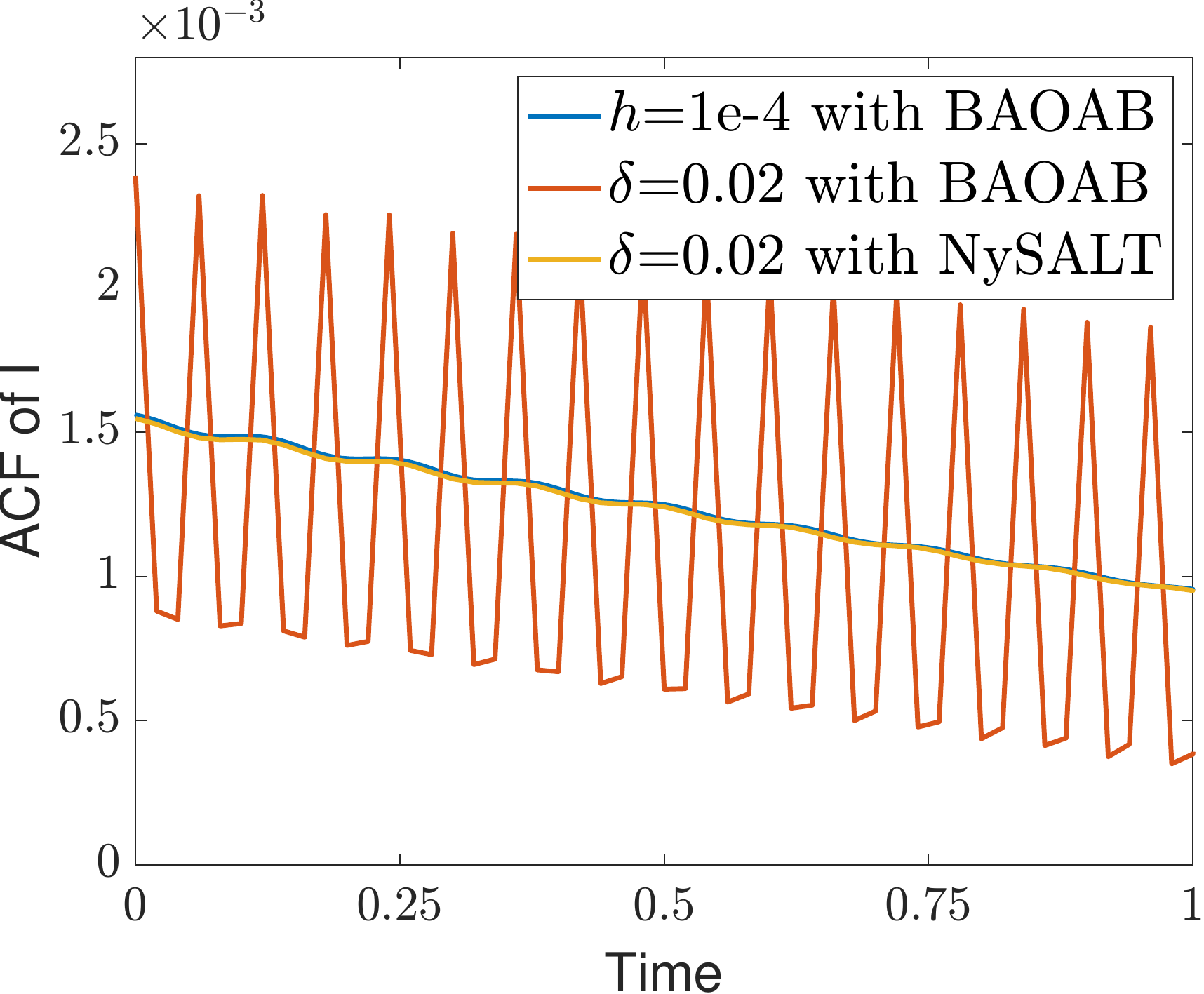}
    \caption{Comparison between the NySALT scheme with BAOAB scheme when the coarse step size is $\delta=0.02$, with the reference being the BAOAB scheme  with a fine step size $h=1\mathrm{e}{-4}$.   \textbf{Left:} Trajectories of total stiff energy $I$ by both schemes. \textbf{Right:} Time auto-covariance function (ACF) \eqref{eq:ACF} of total stiff energy $I$ by both schemes.}
    \label{Fig:SFPU_gap200}
\end{figure}

\section{A flow map approximation framework for learning integrators}\label{sec:framework}
Many classical numerical integrators are derived from (It\^o-) Taylor expansion for small time-stepping. Thus, they are universal and accurate when the time step size is small. However, they are not designed for integration with large time-stepping.  
 
We introduce a flow map approximation framework to learn numerical integrators that are adaptive to large time step size from data.
The fundamental idea is to approximate the discrete-time flow map by a function with parameters inferred from data. This approach takes advantage of the information from data, which consists of multiple trajectories generated by an accurate numerical integrator. It consists of three steps: data generation, parametric form derivation, and parameter estimation.

 The framework applies to both deterministic and stochastic dynamical systems, in which we treat the stochastic forcing as an input. In the following, we first introduce this approach, then we analyze the convergence of the parametric estimator and the error bounds of the learnt integrator.

\subsection{Flow map approximation for Hamiltonian systems}\label{sec:flowDet}
Let $\mX_t=\bcm \bm{q}_t \\ \bm{p}_t\ecm$ denote the state of a Hamiltonian system that satisfies
    \[
    \frac{d\mX_t}{dt} = b(\mX_t) 
    =\mbf{J}\nabla_{\mX}H,
    \]
where $\mbf{J} = \left(\begin{array}{cc}\mbf{0}&-\mbf{I}\\
    \mbf{I} & \mbf{0}\end{array}
    \right)$. 
The exact discrete-time flow map of $\mX_t$ on coarse  grids $\{t_i= i\delta \}_{i=0}^{N_t}$ satisfies
\[
\mX_{t_{i+1}}-\mX_{t_i}=
\deltat \mathcal{F}(\mX_{t_{i}}, \delta),
\]
where $\mathcal{F}$ is a function preserving the symplectic structure  (i.e., the phase-space volume of a closed surface is preserved). Since the coarse step $\delta$ is relatively large, a classical numerical integrator becomes inaccurate (see Figure \ref{Fig:FPU_linear_stable}). 

To obtain a numerical integrator with coarse step size $\delta$, 
we infer from multiple-trajectory data a symplectic function $F_\theta(\mX_{t_{i}}, \delta)$ that approximates the flow map $\mathcal{F}(\mX_{t_{i}},\delta)$: 
\[
\mX_{t_{i+1}}-\mX_{t_i} \approx
\deltat F_\theta(\mX_{t_{i}},\delta). 
\]
Here $\{F_\theta(\mX_{t_{i}},\delta), \theta\in \Theta\}$ is a family of parametric functions, whose parametric form comes from classical numerical integrators (see discussions below). The inference procedure consists of three steps: data generation, parametric form derivation, and parameter estimation. 

\paragraph{Data generation.} We generate data consisting of multiple trajectories with random initial conditions, utilizing an accurate classical numerical scheme with a fine step size $h$, which is much smaller than $\delta$, i.e., $\delta= \Gap\cdot h $. The initial conditions $\{\mX_{t_0}^{(m)}\}_{m=1}^M$ are sampled from a given distribution $\mu$ on  $\R^{2d}$, so that the trajectories explore the flow map sufficiently. 
 Then, we down-sample these trajectories to obtain training data on coarse grids $\{t_i= i\delta \}_{i=0}^{N_t}$, which we denote as 
\[ 
\textbf{Data: } \{\mX_{t_i}^{(m)},i=0,\ldots,N_t\}_{m=1}^M =\left\{\bcm \mbf{q}^{(m)}_{t_i}\\ \mbf{p}^{(m)}_{t_i}\ecm  ,i=0,\ldots,N_t\right\}_{m=1}^{M}.
\]

\paragraph{Parametric form from the Nystr\"{o}m family.}
The major difficulty in our inference-based approach is the derivation of the parametric symplectic maps. The symplectic structure is crucial for the flow maps of Hamiltonian systems. We propose to utilize the family of classical numerical integrators, particularly those come with parameters. As discussed in the introduction, there is a rich class of structure-preserving numerical integrators that are dedicated to long-time simulation of Hamiltonian systems. For simplicity as well as flexibility, we consider the family of the explicit 2-step Nyst{\"{o}}m method for the parametric function  $F_{\theta}(\mX_{t_{i}},\delta)$.

Specifically, for the system \eqref{eq:Hamil_pq}, we consider the parametric function $F_{\theta}$ from the $2$-step Nyst{\"{o}}m method in \eqref{eq:nystrom2}:  
\begin{equation}\label{flow_map_deter}
\begin{aligned}
 F_{\theta}(\mX_{t_{i}}, \delta) =& \frac{1}{\delta}\left(S_{b_1, \beta_1}^\delta(\mX_{t_{i}}\right) - \mX_{t_{i}})
  =\left(
  \begin{array}{c}
{\mbf{p}}_{t_{i}}
+\deltat\left(\beta_1 \ell_1+\beta_2\ell_2\right)\\
 \left(b_1 \ell_1+b_2\ell_2\right)
  \end{array}
  \right),
  \end{aligned}
\end{equation}
where the terms $\ell_1$ and $\ell_2$ are defined as
\begin{equation}\label{eq:l1l2}
\begin{aligned}
\ell_1= 
g(\mbf{q}_{t_{i}}+ c_1 \deltat \mbf{p}_{t_{i}}) \quad \text{and} \quad
\ell_2=
g( \mbf{q}_{t_{i}}+ c_2 \deltat \mbf{p}_{t_{i}} +
\deltat^2 a_{21}\ell_1),
\end{aligned}
\end{equation}
 with parameters $\{\beta_1,\, \beta_2,\, b_1,\, b_2,\, c_1,\,c_2,\,a_{21} \}$ satisfying \eqref{Nystrom_constraint_summary}. The free parameters to be estimated from data are 
 \begin{equation}\label{eq:thetaRange}
 \theta = (b_1,\beta_1) \in \Theta =(0,1)\times [0,\frac{1}{2}].
 \end{equation}

\paragraph{Parameter estimation.}
We estimate $\theta$ by minimizing the 1-step prediction error:   
\begin{equation}\label{opt_flow_map_Weightcost}
\theta_M^* = \argmin{\theta\in \Theta} \mathcal{E}_M(\theta), 
\end{equation}
where the loss function $\mathcal{E}_M(\theta)$ is the 1-step prediction error and is computed from data: 
\begin{align}
    \mathcal{E}_M(\theta)  & = \frac{1}{MN_t} \sum_{m=1}^M \sum_{i=0}^{N_t-1} \left\|F_\theta(\bm{X}_{t_{i}}^{(m)},\delta) - \mathcal{F}(\bm{X}_{t_{i}}^{(m)},\delta) \right\|_{\Sigma^{-1}}^2 \notag\\
    & = \frac{1}{MN_t} \sum_{m=1}^M \sum_{i=0}^{N_t-1}  \left\|F_\theta(\bm{X}_{t_{i}}^{(m)},\delta) -(\mX^{(m)}_{t_{i+1}}-\mX^{(m)}_{t_i})/\deltat \right\|_{\Sigma^{-1}}^2, 
    \label{eq:loss_det}
\end{align}
with $\|\bm{Y}\|_{\Sigma^{-1}}^2$ as the notation of trace norm of $\bm{Y}^T \Sigma^{-1} \bm{Y}$, i.e,  $\|\bm{Y}\|_{\Sigma^{-1}}^2 = \|\bm{Y}^T \Sigma^{-1} \bm{Y}\|_*=\text{Tr}(\bm{Y}^T \Sigma^{-1} \bm{Y})$. Here $\Sigma$ is a diagonal weight matrix aiming to normalize the contributions of the entries. We set $\Sigma$ to be a diagonal matrix with diagonal entries being the  the mean of entrywise square of $(\mX^{(m)}_{t_{i+1}}-\mX^{(m)}_{t_i})/\delta$.  

Notice that the optimization problem is nonlinear because of the nonlinear function $g$ in \eqref{eq:Hamil_pq}. 
Since the parameter is in a 2D rectangle and  the loss function is smooth, we solved it by constrained nonlinear optimization with the interior point algorithm.

As to be shown in Section \ref{sec:conv}, the estimator $\theta_M^*$ converges almost surely regarding $M$ under suitable conditions on the uniqueness of the minimizer. We select a stabilized estimator (when the sample size is sufficiently large) as $\theta^*$ for our NySALT scheme 
\begin{equation}\label{inferr_scheme_deter}
    \mX_{t_{i+1}} - \mX_{t_{i}} = \deltat F_{\theta^*}(\mX_{t_{i}},\delta).
\end{equation}

 \subsection{Flow map approximation for Langevin systems}    \label{sec:flowSto}
The governing equation \eqref{eq:Hamil_pqstoc}  to Langevin dynamics is written as
\begin{equation}
    d\mX_t= b(\mX_t)dt+\sigma(\mX_{t_i})  d\mW_t.
\label{eq:generalSDE}
\end{equation}
In integral form, we can write exact solution $\mX_{t}$ on each coarse grid $\{t_i\}$ as  
\begin{equation}\label{eq:flowmap}
 \mX_{t_{i+1}}  - \mX_{t_{i}} =\int_{t_i}^{t_{i+1}} b(\mX_s)ds+ \sigma(\mX_{t_i}) (\mW_{t_{i+1}} -\mW_{t_i}) = \deltat \mcF(\mX_{t_i}, \mW_{[t_i,t_{i+1}]},\delta).
\end{equation}
Here the discrete-time flow map $\mcF(\mX_{t_i}, \mW_{[t_i,t_{i+1}]}, \delta)$ is an infinite-dimensional functional that depends on the path of the Brownian motion  $\mW_{[t_i,t_{i+1}]}$. In general,  a numerical scheme approximates the discrete-time flow map by a function depending on $\mX_{t_i}$ and a low-dimensional approximation of the Brownian path $\mW_{[t_i,t_{i+1}]}$ (either in distribution in the weak sense or trajectory-wisely in the strong sense).  For example, the Euler-Maruyama scheme gives the function $ F(\mX_{t_i},  \xi_i,\delta) = b(\mX_{t_i}) + \sigma(\mX_{t_i}) \xi_i/\delta$ with $\xi_i =\mW_{t_{i+1}}- \mW_{t_i}\sim \mathcal{N}(0,\delta)$. 
Due to their reliance on the Ito-Taylor expansion, these classical schemes require a small time step for accuracy.

In order to allow a large time-stepping $\deltat$, from data we infer a parametric function $ F_\theta(\mX_{t_i},  \xi_i,\deltat)$ to approximate the flow map $\mcF(\mX_{t_i}, \mW_{[t_i,t_{i+1}]}, \delta)$, where $\xi_i$ depends on the path $ \mW_{[t_i,t_{i+1})}^{(m)}$.   Similar to the deterministic case, the inference consists of three steps: data generation, parametric form derivation, and parameter estimation.

\paragraph{Data generation.} 
The data consists of both the process $\mX_t$ and the stochastic force $\xi_t$. 
\[ 
\textbf{Data: } \{\mX_{t_i}^{(m)},\xi_{t_i}^{(m)}, i=0,\ldots,N_t\}_{m=1}^M =\left\{\bcm \mbf{q}^{(m)}_{t_i}\\\mbf{p}^{(m)}_{t_i}\ecm,\xi_{t_i}^{(m)} ,i=0,\ldots,N_t\right\}_{m=1}^{M}.
\]
The initial conditions $\{\mX_{t_0}^{(m)}\}_{m=1}^M$ are sampled from a distribution $\mu$ on $\R^{2d}$, so that the short trajectories can explore the flow map sufficiently. 
 Suppose that the system is resolved accurately by an integrator with a fine time step size $h$. Then similar to deterministic systems, a data trajectory $\mX_{t_i}$ is obtained by down-sampling the fine solution with coarse grid $\{t_i = i\delta \}_{i=1}^{N_t}$.

However, the stochastic force cannot be down-sampled directly since one has to follow the desired distribution in \eqref{eq:SO_def}. 
Here we use the one-step increment of OU process to approximate $\xi_{t_i}$ which takes into account the friction and the noise. Consider the OU process $dY_t = -\gamma Y_t dt + \sigma d\mW_t$, the solution of this OU process with coarse step $\delta$ is expressed by using noise with fine time step $h$, 

\begin{align*}
Y_{\delta} &= e^{-\gamma \delta} Y_0+\sigma \int_0^\delta e^{-\gamma (\delta-s)}d\mW_s 
= e^{-\gamma \delta } Y_0+\sigma \sum_{j=1}^\Gap \int_{jh-h}^{jh} e^{-\gamma (\delta-s)}d\mW_s \\
& \sim  e^{-\gamma \delta } Y_0 + \sigma \sum_{j=1}^\Gap \sqrt{\frac{1}{2\gamma }(1-\exp^{-2\gamma h}) } e^{-\gamma (\Gap-j)h}(\mW_{jh} - \mW_{jh-h})/\sqrt{h}.
\end{align*}
where in the last step we used the fact that $\int_a^b e^{-{\gamma (t-s)}}d\mW_s \sim \mathcal{N}(0, \frac{1}{2\gamma} (e^{-2\gamma a} -e^{-2\gamma b}) )$. 
Then the one-step increment at time instants $t_i$ can be approximated by
\begin{align}\label{eq:Xi_data}
\xi_{t_{i}} = \sigma \sqrt{\frac{1}{2\gamma}(1-\exp^{-2\gamma {h}}) } \sum_{j=1}^{\Gap} e^{-\gamma j h}   R_{i,j}. 
\end{align}
where $R_{i,j} =(\mW_{((i-1)\Gap +j)h} -\mW_{((i-1)\Gap+j-1)h})/ \sqrt{h}  $ is the scaled increment of the Brownian motion. Consequently, one can show that $\xi_{t_i}\sim \mathcal{N}\left(0,  {\frac{\sigma^2}{2\gamma}(1-\exp^{-2\gamma \delta}) } \right)$.

\paragraph{Parametric form from the the Nystr\"{o}m family.} We approximate the flow map by the parametric function $F_{\theta}(\mX_{t_{i}},\xi_{t_{i}}, \deltat )$ in the stochastic symplectic Nystr\"{o}m scheme introduced in \eqref{eq:SO_def}. 
Note that it consists of the symplectic integrator $S^{h}_{b_1,\beta_1}$ and the Ornstein-Uhlenbeck integrator, 
\begin{equation}\label{flow_map_sto}
\begin{aligned}
 F_{\theta}(\mX_{t_{i}}, \xi_{t_{i}}, \delta) =& 
 \left(
  \begin{array}{c}
{\mbf{p}}_{t_{i}}
+\deltat\left(\beta_1 \ell_1+\beta_2\ell_2\right)\\
 \left(\frac{\exp(-\gamma h) -1}{\delta}\right)\mbf{p}_{t_{i}}+\exp(-\gamma h)  \left(b_1 \ell_1+b_2\ell_2\right) + \frac{\xi_{t_i}}{\delta}
  \end{array}
  \right),
  \end{aligned}
\end{equation}
where $\ell_1$ and $\ell_2$ are defined in \eqref{eq:l1l2}. Thus, it has the same parametric form as the symplectic integrator $S^{h}_{b_1,\beta_1}$ as the deterministic case, and the range for the parameter keeps the same as in \eqref{eq:thetaRange}.

\paragraph{Parameter estimation.}
 The parameter $\theta\in \Theta$ is estimated by minimizing the 1-step prediction error 
\begin{align}\label{opt_flow_map_Weightcost_sto}
\theta_M^* &= \argmin{\theta\in \Theta} \mathcal{E}_M(\theta), \text{ with } \\ \label{eq:loss_sto}
\mathcal{E}_M(\theta) &= \frac{1}{MN_t} \sum_{m=1}^M \sum_{i=0}^{N_t-1} \left\| \left(\mX_{t_i}^{(m)}+\delta F_\theta(\mX_{t_i}^{(m)}, \xi_i^{(m)},\deltat )\right) - \mX_{t_{i+1}}^{(m)} 
\right\|_{\Sigma^{-1}}^2, 
\end{align}
where $\{\mX_{t_{i}}^{(m)}, \xi_i^{(m)}\}_{m=1}^M$ are down-sampled fine scale data consisting of $M$ trajectories of the state and the coarsened increments of stochastic force. $\Sigma$ is the diagonal weight matrix to normalize the contribution of $\mbf{p}$ and $\mbf{q}$. We set $\Sigma$ to be a diagonal matrix with diagonal entries being the mean of the square of $(\mX^{(m)}_{t_{i+1}}-\mX^{(m)}_{t_i})$.  We note that the loss function is not the log-likelihood of the data, which is not available because the transition density of the stochastic symplectic Nystr\"{o}m scheme scheme is nonlinear and non-Gaussian without an explicit form.

With the gradient of the 1-step prediction error explicitly calculated in Appendix \ref{appendix-A}, we solve this constrained nonlinear optimization with the interior point algorithm.  
Since the estimator $\theta_M$ converges almost surely as $M$ increases (see Section \ref{sec:conv}), we select a stabilized estimator as $\theta^*$ for our inferred scheme, 
\begin{equation}\label{inferr_scheme_sto}
    \mX_{t_{i+1}} - \mX_{t_{i}} = \deltat F_{\theta^*}(\mX_{t_{i}},\xi_{t_i}, \deltat),
\end{equation}
where $\xi_{t_i}$ is sampled from $\mathcal{N}\left(0,  {\frac{\sigma^2}{2\gamma}(1-\exp^{-2\gamma \delta}) } \right) $, 
the same distribution as the coarsened increments of the OU process.

\subsection{Convergence of the parameter estimator}\label{sec:conv}
We show that the parameter estimator converges as the number of independent data trajectories increases, under suitable conditions on the loss function. These conditions require the parametric function $F_\theta$ to be continuously differentiable in $\theta$, along with integrability conditions that generally hold true for Hamiltonian systems and symplectic integrators.

For simplicity of notation, we denote the loss for each data trajectory by 
\begin{equation}\label{eq:loss1traj}
L(\theta) =  
\begin{cases}
\frac{1}{N_t}\sum_{i=0}^{N_t-1}  \left\|F_\theta(\bm{X}_{t_{i}},\delta) - \mathcal{F}(\bm{X}_{t_{i}},\delta) \right\|_{\Sigma^{-1}}^2, \quad \text{for deterministic \eqref{eq:loss_det}}, \\
\frac{1}{N_t}  \sum_{i=0}^{N_t-1} \left\| \delta \left(F_\theta(\mX_{t_i}, \xi_{t_i},\deltat ) -  \mcF(\mX_{t_i}, \mW_{[t_i, t_{i+1}]},\deltat)\right)\right\|_{\Sigma^{-1}}^2, \quad \text{for stochastic \eqref{eq:loss_sto}}. 
\end{cases}
\end{equation}
Then, $\calE_M(\theta) =  \frac{1}{M}\sum_{m=1}^{M} L^{(m)}(\theta) $, where $L^{(m)}(\theta) $ is the loss of the $m$-th data trajectory $\mX^{(m)}$. 

Hereafter, we denote $\mathbb{P}$ the probability measure that characterizes the randomness coming from initial conditions and the stochastic driving force. We denote $\E$ the corresponding expectation.  

\begin{assumption}\label{assum}
 We make the following assumptions:
\begin{itemize}
\item[{\rm (a)}] $\E[L(\theta)] \in C^2(\Theta)$, $\E[|\nabla L(\theta)|^2]< \infty$ and $\E[|\nabla^2 L(\theta)|]<\infty$ for any $\theta\in \Theta^o$,  the interior of $\Theta$.
\item[{\rm (b)}] $\theta^*\in \Theta^o$ is the unique minimizer of $\E [L(\theta)]$ in $\Theta$. 
\item[{\rm (c)}] There exists $C>0$, $p\geq 1$ and $q>1$ such that $\E[|L(\theta_1)- L(\theta_2)|^{2p}] \leq C|\theta_1-\theta_2|^q$ for any $\theta_1,\theta_2\in\Theta$. 
\end{itemize}
\end{assumption}

\begin{theorem}\label{thm_convEst} Under Assumption {\rm \ref{assum}}, the estimator $\theta_M$ in either \eqref{opt_flow_map_Weightcost} or \eqref{opt_flow_map_Weightcost_sto} converges to $\theta^*$ in probability and  $\sqrt{M} (\theta_{M}- \theta^*)$ is asymptotically normal as $M\to \infty$. 
\end{theorem}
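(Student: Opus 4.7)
The plan is to prove the theorem by the classical two-stage M-estimator argument: first establish consistency via a uniform law of large numbers, then obtain asymptotic normality by Taylor-expanding the first-order optimality condition. The key structural observation is that $\mathcal{E}_M(\theta) = \frac{1}{M}\sum_{m=1}^M L^{(m)}(\theta)$ is an empirical average of i.i.d.\ random functions (the trajectories $\mathbf{X}^{(m)}$ are independent by construction), so assumption \ref{assum} places us squarely in the framework of empirical-risk minimization.

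For consistency, I would first note that assumption (a) ensures $\mathbb{E}[L(\theta)]$ is well-defined and continuous on $\Theta$, and by the strong law of large numbers $\mathcal{E}_M(\theta) \to \mathbb{E}[L(\theta)]$ pointwise almost surely. The essential step is upgrading pointwise to uniform convergence on the compact set $\Theta$. Here assumption (c) does the work: the moment bound $\mathbb{E}[|L(\theta_1) - L(\theta_2)|^{2p}] \leq C|\theta_1-\theta_2|^q$ with $q>1$ lets us invoke Kolmogorov's continuity criterion (or an equivalent chaining/bracketing-entropy argument, noting $\Theta$ is a bounded subset of $\mathbb{R}^2$ so the covering number grows only polynomially) to show that the random field $\theta \mapsto \mathcal{E}_M(\theta) - \mathbb{E}[L(\theta)]$ is stochastically equicontinuous. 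Equicontinuity plus pointwise convergence on a compact set yields $\sup_{\theta\in\Theta}|\mathcal{E}_M(\theta) - \mathbb{E}[L(\theta)]| \to 0$ in probability. Combined with assumption (b) (unique interior minimizer $\theta^*$), the standard argmin continuous mapping theorem (e.g., van der Vaart, Theorem 5.7) yields $\theta_M \to \theta^*$ in probability.

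For asymptotic normality, consistency and $\theta^*\in\Theta^o$ imply that with probability tending to one $\theta_M$ lies in the interior, so the first-order condition $\nabla \mathcal{E}_M(\theta_M)=0$ holds. A mean-value expansion gives
\begin{equation*}
0 \;=\; \nabla \mathcal{E}_M(\theta_M) \;=\; \nabla \mathcal{E}_M(\theta^*) + H_M(\tilde\theta_M)\,(\theta_M - \theta^*),
\end{equation*}
where $\tilde\theta_M$ lies on the segment joining $\theta_M$ and $\theta^*$ and $H_M$ denotes the empirical Hessian. Differentiating under the expectation (justified by the integrability parts of assumption (a)) gives $\mathbb{E}[\nabla L(\theta^*)] = \nabla \mathbb{E}[L(\theta^*)]=0$; hence the classical CLT applied to the i.i.d.\ summands $\nabla L^{(m)}(\theta^*)$, whose covariance is finite by $\mathbb{E}[|\nabla L(\theta^*)|^2]<\infty$, yields $\sqrt{M}\,\nabla\mathcal{E}_M(\theta^*) \Rightarrow \mathcal{N}(0,\Sigma_*)$ with $\Sigma_* = \mathrm{Cov}(\nabla L(\theta^*))$. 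A uniform LLN argument (again on the compact $\Theta$, using $\mathbb{E}[|\nabla^2 L(\theta)|]<\infty$) combined with consistency of $\tilde\theta_M$ and continuity of $\theta \mapsto \nabla^2 \mathbb{E}[L(\theta)]$ gives $H_M(\tilde\theta_M) \to H_* := \nabla^2\mathbb{E}[L(\theta^*)]$ in probability. Finally, Slutsky's theorem delivers
\begin{equation*}
\sqrt{M}(\theta_M - \theta^*) \;=\; -H_M(\tilde\theta_M)^{-1}\,\sqrt{M}\,\nabla\mathcal{E}_M(\theta^*) \;\Rightarrow\; \mathcal{N}\!\bigl(0,\; H_*^{-1}\,\Sigma_*\,H_*^{-1}\bigr).
\end{equation*}

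The main obstacle I anticipate is the upgrade from pointwise to uniform convergence, since assumption (c) is the only ingredient controlling the modulus of continuity of the empirical process. One must verify that the Kolmogorov/chaining hypotheses are actually satisfied with the given exponents $p\geq 1$ and $q>1$ (this requires $q>2\cdot\dim\Theta/(2p) = 1/p$, which is automatic here since $\dim\Theta=2$ and the condition $q>1$ is stated). A secondary technical point is justifying the invertibility of $H_*$ needed to write $H_M^{-1}$; this is not explicitly stated in assumption \ref{assum} but follows from (b) together with local strict convexity, and if only positive semidefiniteness holds, the conclusion should be read as a possibly degenerate Gaussian limit.
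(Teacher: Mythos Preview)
Your proposal is correct and follows essentially the same two-stage M-estimator argument as the paper: upgrade pointwise convergence to a uniform statement via the moment condition (c), deduce consistency by an argmin-type argument, then Taylor-expand the first-order condition and combine the CLT for $\nabla\mathcal{E}_M(\theta^*)$ with the LLN for the empirical Hessian through Slutsky.

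The only noteworthy difference is in packaging. For the consistency step the paper phrases the uniformity as \emph{weak convergence of the process} $\mathcal{E}_M(\cdot)$ in $C(\overline\Theta)$: finite-dimensional convergence from the LLN plus tightness from assumption (c), invoking \cite[Lemma 1.33]{Kut04} and \cite[Theorem 13.2]{billingsley2013convergence}, and then reads off the convergence of the minimizer as a continuous functional of the limiting process. You instead phrase it as \emph{stochastic equicontinuity} via Kolmogorov/chaining and then apply the argmin continuous-mapping theorem directly. These are two sides of the same coin; your formulation is arguably more explicit about what assumption (c) is doing, while the paper's formulation outsources the chaining to the cited references. The asymptotic-normality halves of the two proofs are essentially identical, and your observation that the invertibility of $H_* = \nabla^2\mathbb{E}[L(\theta^*)]$ is being used without being explicitly listed in Assumption~\ref{assum} is accurate---the paper simply asserts strict positive definiteness in the body of the proof.
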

\begin{proof} First, we show that  $\theta_M$ converges to $\theta^*$ in probability, i.e., for any $\nu>0$, $\lim_{M\to \infty} \prob{|\theta_M-\theta^*|>\nu} =0$, where $\prob{A}$ is the probability of an event $A$.  

Note that for any $(\theta_1,\ldots,\theta_k)\subset \Theta$, as $M\to \infty$, we have the convergence in probability of the vectors 
\[(\calE_M(\theta_1,\ldots,\calE_M(\theta_k)) \to ((\E[L(\theta_1)],\ldots,\E[L(\theta_k)) ])\]
by the law of large numbers. Together with Assumption \ref{assum} (c), they imply that the measure induced by $\calE_M(\cdot)$ on $(C(\overline \Theta),\mathcal{B})$, the space of continuous functions on $\overline \Theta$ with uniform metric and with $\mathcal{B}$ being the $\sigma$-algebra of Borel subsets, converges to the measure induced by $\E[L(\cdot)]$ (see \cite[Lemma 1.33, page 61]{Kut04} and \cite[Theorem 13.2]{billingsley2013convergence}). Then, any continuous functional  of the process $\calE_M(\cdot)$ converges in probability as $M\to \infty$. In particular, we have for any $\nu>0$
\[
\prob{\sup_{|\theta-\theta^*|>\nu} \calE_M>  \sup_{|\theta-\theta^*|<\nu} \calE_M} \to \prob{\sup_{|\theta-\theta^*|>\nu} \E[L(\theta)]>  \sup_{|\theta-\theta^*|<\nu} \E[L(\theta)]} =0,
\]
where the equality follows from Assumption \ref{assum} (b). Meanwhile, note that by the definition of $\theta_M$ in \eqref{eq:loss_sto}, we have
\[ 
\prob{|\theta_M-\theta^*|>\nu} = \prob{\sup_{|\theta-\theta^*|>\nu} \calE_M>  \sup_{|\theta-\theta^*|<\nu} \calE_M}. 
\]
Combining the above two equations, we obtain the convergence in probability of $\theta_M$ to $\theta^*$. 

Next, we show that $\sqrt{M} (\theta_{M}- \theta^*)$ is asymptotically normal. Since $\theta_M$ is a minimizer of $\calE_M$, we have
\[
0= \nabla \calE_M(\theta_M) = \nabla \calE_M(\theta^*) + \nabla^2\calE_M(\widetilde \theta_M) (\theta_M-\theta^*),  
\]
where $\widetilde \theta_M = \theta^*+ s(\theta_M-\theta^*)$ for some $s\in[0,1]$. 

Note first that $ \nabla^2\calE_M(\widetilde \theta_M)$ converges in probability to $\E[\nabla^2 L(\theta^*)] $. It follows by the law of large numbers, Assumption {\rm \ref{assum}}(a), and the consistency of $\theta_M$, which implies that $\widetilde \theta_M$ converges to $\theta^*$. Thus, the inverse of the matrix $\nabla^2\calE_M(\widetilde \theta_M)$ exists when $M$ is large, because $\E[\nabla^2 L(\theta^*)] $ is strictly positive definite. 
 Thus, 
\[
\theta_M-\theta^* = \nabla^2\calE_M(\widetilde \theta_M)^{-1} \nabla \calE_M(\theta^*). 
\]

Note also that $\nabla \calE_M(\theta^*) = \frac{1}{M}\sum_{m=1}^M\nabla L^{(m)}(\theta^*)$ and $\E[\nabla \calE_M(\theta^*)] = \E[\nabla L(\theta^*)] =0$ because $\theta^*$ is the unique minimizer by Assumption {\rm \ref{assum}}(b).
Thus, by the central limit theorem, we have the convergence in distribution:
\[ \sqrt{M} \nabla \calE_M(\theta^*) \to \mathcal{N}(0, \Sigma_L), 
\]
 where $\Sigma_L $ is the covariance of $\nabla L(\theta^*)$. 
 
 Combining the above, we obtain the asymptotic normality of $\sqrt{M}(\theta_M-\theta^* ) $.  
\end{proof}

\subsection{Statistical error at arbitrary time } 
\paragraph{Hamiltonian systems}
Since the learned integrator is a symplectic partitioned Runge-Kutta method, by \cite[Theorem IX.3.3]{Hairer2006}, any trajectory it generates exactly corresponds to time-discretized stroboscopic samples of a continuous solution of some fixed modified Hamiltonian $\tilde{H}$, at least formally. If the learned integrator has local truncation error of order $p+1$, then we have $\tilde{H}=H+\mathcal{O}(h^p)$ at least formally speaking. Assume the original Hamiltonian system is integrable, analytic, and the initial condition corresponds to frequency vector that are in a sufficiently small neighborhood of some Diophantine frequency vector, then by \cite[Theorem X.3.1]{Hairer2006}, the learned integrator has a linearly growing long time error bound. More precisely, 
\[
    \|\mX_{t_i}-\mX(t_i)\| \leq C h^{p+1} i,
\]
for at least $i\leq \hat{C} h^{-p-1}$, where $\mX_{t_i}$ is the numerical solution given by the learned integrator and $\mX(t_i)$ is the exact solution of the original Hamiltonian. Moreover, for any action variable $\mbf{I}(\mX)$, it is nearly conserved over long time, i.e.,
\[
    |\mbf{I}(\mX_{t_i})-\mbf{I}(\mX_{t_0})| \leq C h^p.
\]

In Section~\ref{sec:FPU_def}, we test the numerical accuracy with respect to step size $\delta$ over different time periods, that is $T_{\text{test}}=0.5$ and $T_{\text{test}}=100$. Note it is difficult to quantitatively put these values in the context of the above discussion, because the validity timespan of $i\leq \hat{C} h^{-p-1}$ may not be the longest possible (see e.g., \cite{benettin1994hamiltonian} for possible exponential results), and constants such as $\hat{C}$ may not be explicit. 

\paragraph{Langevin dynamics}
If the Langevin dynamics \eqref{eq:generalSDE} is contractive in the sense that
  there exists a constant matrix $A$, and constants $t_0>0$ \& $\beta>0$, s.t. for any two solutions $\mX(t)$,  $\mY(t)$ driven by the same stochastic forcing (i.e. synchronous coupling),
\[				    \left(\mathbb{E} \|A\left(\mX(t) - \mY(t)\right)\|^2\right)^\frac{1}{2} \le \left(\mathbb{E}\|A\left(\mX(0) - \mY(0) \right)\|^2\right)^\frac{1}{2} e^{-\beta t}, \quad \forall 0\leq t < t_0,
\]
then the framework of mean-square analysis for sampling described in \cite{li2022sqrt} can help obtain a bound of the statistical error of the learned integrator at any $t\in[0, \, t_0)$ (which also means for any number of steps $k$ as $t=kh$). In particular, the kinetic Langevin equation \eqref{eq:Hamil_pqstoc} is known to be contractive when $\gamma$ is large enough (e.g., \cite{dalalyan2020sampling}) and when the potential $V$ is strongly-convex and admitting a Lipschitz gradient. 

In addition, because our NySALT scheme is a Lie-Trotter composition of a consistent Hamiltonian integrator (due to being Nystr\"om) and an exact OU process, the local weak error is at least of order 1 and the local strong error is at least of order $1/2$, 
(see e.g., \cite{milstein2004stochastic}). Therefore, conditions of \cite[Theorem 3.3]{li2022sqrt} are satisfied with $p_1=1$ and $p_2=1/2$. Consequently, \cite[Theorem 3.4]{li2022sqrt} gives
\[
    W_2(\text{Law}(\mX_{t_k}), \mu)
	\le e^{- \beta kh}  W_2(\text{Law}(\mX_{t_0}), \mu) + C h^{1/2}, \quad \forall 0 < h \le h_1
\]
for some explicitly obtainable constant $C$ and $h_1$, where $\mu$ is the ergodic measure associated with the original SDE (i.e., \eqref{eq:Hamil_pqstoc}),
$\mX_{t_k}$ is the numerical solution produced by the NySALT, and $W_2(\cdot)$ is the 2-Wasserstein distance 
$W_2(\mu_1, \mu_2):=\left(\text{inf}_{(\mX,\mY)\sim \Pi(\mu_1,\mu_2)}\mathbb{E}\|\mX-\mY\|^2\right)^{1/2}$. 

\section{Optimal parameters for linear systems}\label{sec:linearAnalysis}
To demonstrate the discrete-time flow map, we first consider  linear systems and show the estimation of optimal parameters. For simplicity of notation, we consider only 1D systems and the extension to higher dimensional systems is straightforward.

\subsection{Linear Hamiltonian systems} \label{sec:lin-Ham}
We first consider a one-dimensional linear Hamiltonian system 
\begin{equation}\label{eq:Linear}
\begin{aligned}
\begin{cases}
\dot{\mbf{q}}=\mbf{p},\\
\dot{\mbf{p}}=-\Omega \mbf{q} . 
\end{cases}
\end{aligned}
\end{equation}
with $\Omega=\omega^2$ and $\mbf{q}=\mbf{q}(t):[0,\, T]\rightarrow \mathbb{R}$.

\begin{proposition} \label{prop:linear_para}
Let $\{\mathbf{X}_{t_i}, i=1,\ldots,N_t\}_{m=1}^M$ be $M$ independent solution trajectories to \eqref{eq:Linear} with time step $\delta = t_{i+1}-t_i$ for all $i$. Then, the loss function \eqref{eq:loss_det} becomes
\begin{equation}\label{eq:loss_linearDet}
\mcE_M(b_1,\beta_1) = \frac{1}{MN_t\delta^2}\sum_{m=1}^M \sum_{i=1}^{N_t} \| ( e^{A \delta} - B_{b_1,\beta_1}^\delta ) \mathbf{X}^{(m)}_{t_i}\|_{\Sigma^{-1}}^2,  
\end{equation}
where $\Sigma = \begin{pmatrix}1 & 0\\ 0& \Omega^{2} \end{pmatrix}$ is  the mean of square of $\Delta \mathbf{X}/\delta$, $A= \bcm
0& 1\\ -\Omega, & 0 
\ecm$ and 
\begin{align}
 B_{b_1,\beta_1}^\delta = \bcm
1 - \frac{1}{2}\delta^2\Omega + \delta^4\Omega^2\beta_2a_{21}, & \delta -\delta^3 \Omega(\beta_1 c_1+ \beta_2 c_2) +\delta^5\Omega^2\beta_2a_{21} c_1 \\ 
 -\delta\Omega + \delta^3 \Omega^2 b_2 a_{21}, & 1 - \frac{1}{2}\delta^2\Omega+ \delta^4\Omega^2 b_2a_{21}c_1\ecm
\end{align}
with $a_{21} = (\beta_1 -\frac{b_1}{b_2} \beta_2), c_1=1-\frac{\beta_1}{b_1}, c_2= 1-\frac{\beta_2}{b_2}, b_2 = 1- b_1$ and $\beta_2=1-\beta_1$.   
In particular, when the  $\mathrm{span}\{\mathbf{X}_{t_i}^{(m)}, i=1,\ldots,N_t,m=1,\ldots,M\} = \R^2$, then the cost function has the same minimizer as $\|( e^{A \delta} - B_{b_1,\beta_1}^\delta )\|_{\Sigma^{-1}}^2$ does.   
\end{proposition}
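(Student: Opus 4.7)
The plan is to prove two things in sequence: the explicit matrix form \eqref{eq:loss_linearDet} of the empirical loss, and the reduction of its argmin, under the spanning hypothesis, to that of the pure matrix functional $\|e^{A\delta}-B^\delta_{b_1,\beta_1}\|_{\Sigma^{-1}}^2$.

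For the matrix form, I first note that the exact flow of \eqref{eq:Linear} is $\mathbf{X}_{t_{i+1}} = e^{A\delta}\mathbf{X}_{t_i}$, so the target in \eqref{eq:loss_det} reads $\mathcal{F}(\mathbf{X}_{t_i},\delta) = \delta^{-1}(e^{A\delta}-I)\mathbf{X}_{t_i}$. The key observation on the Nystr\"om side is that with $g(\mathbf{q}) = -\Omega\mathbf{q}$ linear, the internal stages $\ell_1,\ell_2$ in \eqref{eq:l1l2n} are affine in $(\mathbf{q}_n,\mathbf{p}_n)$ and so the one-step map $S^\delta_{b_1,\beta_1}$ reduces to multiplication by a $2\times 2$ matrix $B^\delta_{b_1,\beta_1}$. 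Substituting $\ell_1 = -\Omega\mathbf{q}_n - c_1\delta\Omega\mathbf{p}_n$ and $\ell_2 = -\Omega\mathbf{q}_n - c_2\delta\Omega\mathbf{p}_n + a_{21}\delta^2\Omega^2\mathbf{q}_n + a_{21}c_1\delta^3\Omega^2\mathbf{p}_n$ into \eqref{eq:nystrom2}, then collecting coefficients of $\mathbf{q}_n$ and $\mathbf{p}_n$ and applying the constraints $\beta_1+\beta_2=1/2$, $b_1+b_2=1$, together with their immediate consequence $b_1c_1+b_2c_2 = (b_1+b_2) - (\beta_1+\beta_2) = 1/2$, exactly reproduces the entries of the claimed $B^\delta_{b_1,\beta_1}$. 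Hence $F_\theta(\mathbf{X}_{t_i},\delta)-\mathcal{F}(\mathbf{X}_{t_i},\delta) = \delta^{-1}(B^\delta_{b_1,\beta_1}-e^{A\delta})\mathbf{X}_{t_i}$, and plugging into \eqref{eq:loss_det} delivers \eqref{eq:loss_linearDet}. The stated $\Sigma = \text{diag}(1,\Omega^2)$ then arises as the entrywise-mean-square of $\Delta\mathbf{X}/\delta$ for the harmonic oscillator, since $\Delta \mathbf{q}/\delta \approx \mathbf{p}$ and $\Delta\mathbf{p}/\delta \approx -\Omega\mathbf{q}$ and the two components scale as $1$ and $\Omega$ on the energy ellipse.

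For the reduction, I rewrite the loss as
\begin{equation*}
\mathcal{E}_M(\theta) \;=\; \frac{1}{\delta^2}\,\text{tr}\!\left(C_\theta^{\top}\Sigma^{-1} C_\theta\, \bar{S}_M\right), \qquad \bar{S}_M := \frac{1}{MN_t}\sum_{m,i}\mathbf{X}_{t_i}^{(m)}(\mathbf{X}_{t_i}^{(m)})^{\top},
\end{equation*}
with $C_\theta := e^{A\delta} - B^\delta_{b_1,\beta_1}$. Under the spanning hypothesis, $\bar{S}_M\succ 0$, so $\mathcal{E}_M$ is a strictly positive quadratic form in the entries of $C_\theta$, in particular in $(b_1,\beta_1)$. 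To match the first-order conditions with those of $\|C_\theta\|_{\Sigma^{-1}}^2 = \text{tr}(C_\theta^{\top}\Sigma^{-1}C_\theta)$, I would use that the harmonic flow preserves the energy ellipse, so any initial law that is rotationally symmetric in the rescaled coordinates $(\Omega^{1/2}\mathbf{q},\mathbf{p})$ yields a time-averaged $\bar{S}$ proportional to $\Sigma$; the weight then cancels against $\Sigma^{-1}$ on the left and the two stationarity systems in $(b_1,\beta_1)$ collapse to the same set.

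The principal obstacle will be exactly this last step, because for generic finite data $\bar{S}_M$ is neither isotropic nor proportional to $\Sigma$, and in that case the two minimizers could differ at order $M^{-1/2}$. I would deal with this by either imposing a mild symmetry on the initial distribution (e.g.\ drawing $(\Omega^{1/2}\mathbf{q}_0,\mathbf{p}_0)$ from a rotationally invariant law so that $\mathbb{E}[\bar{S}_M]\propto\Sigma$ exactly), or by invoking Theorem~\ref{thm_convEst} to interpret the equality of minimizers in the large-$M$ limit, where $\bar{S}_M\to\bar{S}_\infty\propto\Sigma$ almost surely and the argmins then coincide. The spanning hypothesis in the statement is precisely what guarantees invertibility of $\bar{S}_M$ (resp.\ $\bar{S}_\infty$) so that this identification is well-posed.
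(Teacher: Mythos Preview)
Your derivation of \eqref{eq:loss_linearDet} and of the explicit matrix $B^\delta_{b_1,\beta_1}$ follows exactly the route the paper takes: compute $\ell_1,\ell_2$ for $g(\mathbf{q})=-\Omega\mathbf{q}$, substitute into \eqref{eq:nystrom2}, and collect coefficients of $\mathbf{q}_n,\mathbf{p}_n$ using the constraints \eqref{Nystrom_constraint_summary}. That part is correct and essentially identical to the paper's argument.

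For the ``in particular'' clause you have actually gone further than the paper, whose proof stops after obtaining \eqref{eq:loss_linearDet} and does not justify the minimizer reduction at all. Your concern is legitimate: the trace representation $\mathcal{E}_M(\theta)=\delta^{-2}\,\mathrm{tr}\!\big(C_\theta^\top\Sigma^{-1}C_\theta\,\bar S_M\big)$ shows that for generic finite data the argmin depends on $\bar S_M$ and need not coincide with that of $\mathrm{tr}(C_\theta^\top\Sigma^{-1}C_\theta)$ unless $\bar S_M$ is a scalar multiple of the identity. Your proposed fix via isotropy of the initial law is the right instinct, but there is a small miscalculation: time-averaging along an orbit of the harmonic flow (virial theorem) gives $\langle p^2\rangle/\langle q^2\rangle=\Omega$, so $\bar S_\infty\propto\mathrm{diag}(1,\Omega)$, which is \emph{not} proportional to the stated $\Sigma=\mathrm{diag}(1,\Omega^2)$; hence the cancellation you describe does not occur as written. (A separate minor slip: $\mathcal{E}_M$ is quadratic in the entries of $C_\theta$, but those entries are rational, not linear, in $(b_1,\beta_1)$, so the loss is not quadratic in the parameters.) In practice the paper treats this reduction heuristically---Remark~\ref{rmk:optEst_linear} computes the minimizer directly from the leading-order expansion of $\|e^{A\delta}-B^\delta_{b_1,\beta_1}\|_{\Sigma^{-1}}^2$---so the spanning condition should be read as guaranteeing that $\bar S_M\succ 0$ and hence that the loss is coercive in $C_\theta$, rather than as a literal equality of argmins for every finite sample.
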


\begin{proof}
Denote the discrete Nystr\"{o}m solution by $(\mbf{q},\, \mbf{p})$.  
At $t_{i+1}$, the Nystr\"{o}m method gives
\[
\begin{aligned}
\ell_1=-\Omega \big(\mbf{q}_{t_i}+c_1\delta \mbf{p}_{t_i}\big), \quad &\text{ and } \quad \ell_2= -\Omega\big( 
\mbf{q}_{t_i}+c_2\delta \mbf{p}_{t_i}+\delta^2 a_{21}\ell_1\big),\\
\mbf{q}_{t_{i+1}}=
\mbf{q}_{t_i}+\delta \mbf{p}_{t_i}+\delta^2\big(\beta_1\ell_1+\beta_2\ell_2\big), \quad &\text{ and } \quad  \mbf{p}_{t_{i+1}}=
\mbf{p}_{t_i}+\delta\big(b_1\ell_1+b_2\ell_2\big),
\end{aligned}
\]
where the parameters $\big\{\{\beta_k\}_{k=1}^2,\,\{b_k\}_{k=1}^2,\,\{c_k\}_{k=1}^2,\, a_{21}\big\}$ satisfy the constraints \eqref{Nystrom_constraint_summary}.
Simplifying the expressions of $\mbf{q}_{t_{i+1}}$ and $\mbf{p}_{t_{i+1}}$ by the constraints, we get
\begin{equation}\label{Nystrom_sol_1step}
    \begin{aligned}
 \mbf{q}_{t_{i+1}}=&  \mbf{q}_{t_{i}}
 +\delta \mbf{p}_{t_{i}}-\frac{1}{2}\delta^2\Omega \mbf{q}_{t_{i}}
 -\delta^3 \Omega(\beta_1c_1+\beta_2c_2)\mbf{p}_{t_{i}} +\delta^4\Omega^2 \beta_2a_{21}\mbf{q}_{t_{i}}
 +\delta^5\Omega^2\beta_2a_{21}c_1\mbf{p}_{t_{i}},\\
  \mbf{p}_{t_{i+1}}=&\mbf{p}_{t_{i}}-\delta \Omega\mbf{q}_{t_{i}}-\frac{1}{2}\delta^2\Omega \mbf{p}_{t_{i}} +\delta^3 \Omega^2 b_2 a_{21}\mbf{q}_{t_{i}}+\delta^4\Omega^2 b_2a_{21}c_1\mbf{p}_{t_{i}}.
    \end{aligned}
\end{equation}
With the notation $\mathbf{X}_{t_i} = (\mbf{q}_{t_i},\mbf{p}_{t_i} )$, we can write the above Nystr\"om algorithm as
\begin{equation}\label{eq:linearNyst}
     \Xb^N_{t_{i+1}} =  B_{b_1,\beta_1}^\delta  \Xb_{t_{i}}.
\end{equation}
Comparing with the exact solution:  
$\Xb_{t_{i+1}} = e^{A\delta}\Xb_{t_{i}}$. 
We can write the 1-step prediction error as 
\[
 \Xb_{t_{i+1}} - \Xb^N_{t_{i+1}} = ( e^{A\delta} - B_{b_1,\beta_1}^\delta ) \Xb_{t_i}.
\]
Then, with the data, we obtain the cost function \eqref{eq:loss_linearDet}.
\end{proof}

\begin{remark}[Optimal parameter for the linear Hamiltonian system] \label{rmk:optEst_linear}
The minimizers of $\mcE^\delta(b_1,\beta_1)$ in \eqref{eq:loss_linearDet} are close to $b_1^* =0.5$ and $\beta_1^*\approx 0.40$, as shown in Figure {\rm \ref{Fig:para_vs_omega}}. They appear to be independent of $\delta$ because the loss function depends on $\delta$ are in high-orders, which can be seen from a Taylor expansion of $\| B_{b_1,\beta_1}^\delta - e^{A\delta}\|^2_{\Sigma^{-1}}$ up to third order  as follows. 
 Note that $ e^{A\delta} =     I_2 + A\delta +A^2\frac{\delta^2}{2}    + \bcm    0 & -\Omega \\ \Omega^2 & 0     \ecm\frac{\delta^3}{6} + O(\delta^4) $ and 
\begin{align*}
   B_{b_1,\beta_1}^\delta = I_2 + A\delta +A^2\frac{\delta^2}{2}  
    + \bcm
    0 & -6\left(\frac{1}{2} -\frac{\beta_1^2}{b_1}-\frac{\beta_2^2}{b_2}\right)\Omega \\ 6b_2\left(\beta_1-\frac{b_1}{b_2}\beta_2\right)\Omega^2 & 0 
    \ecm\frac{\delta^3}{6} + O(\delta^4). 
\end{align*}
Thus, the trace norm of the discrepancy matrix is 
\begin{align*}
   \| B_{b_1,\beta_1}^\delta - e^{A\delta}\|^2_{\Sigma^{-1}}& =  \left\| \bcm
    0 & \Omega-6\left(\frac{1}{2} -\frac{\beta_1^2}{b_1}-\frac{\beta_2^2}{b_2}\right)\Omega \\ -\Omega^2+6b_2\left(\beta_1-\frac{b_1}{b_2}\beta_2\right)\Omega^2 & 0  \ecm \frac{\delta^3}{6} + O(\delta^4)\right\|^2_{\Sigma^{-1}} \\
    & =  \frac{\delta^6}{36}\Omega^2 \left(\left(\frac{6\beta_1^2}{b_1}+\frac{6\beta_2^2}{b_2}-2\right)^2 + (6b_2\beta_1-6b_1\beta_2-1)^2\right) + O(\delta^8).
\end{align*}
The minimum of the function $f(b_1,\beta_1) = \left(\frac{6\beta_1^2}{b_1}+\frac{6\beta_2^2}{b_2}-2\right)^2 + (6b_2\beta_1-6b_1\beta_2-1)^2$ is reached at  $b_1^* =0.5$ and $\beta_1^*\approx 0.40$. 
Note also that this estimator is independent of $\Omega$, because of the weight matrix $\Sigma$.
\end{remark}

\begin{remark}[Maximal admissible step size of linear stability] \label{rmk:linearStab}
The largest time step size of linear stability for the Nystr\"{o}m integrator \eqref{eq:linearNyst} is determined by $B^\delta_{b_1,\beta_1}$. It is the largest $\delta$ such that the real parts of the eigenvalues of $B^\delta_{b_1,\beta_1}$ are less than or equal $1$. 
For the Nystr\"{o}m integrator with optimal parameters $(b_1^*,\beta_1^*)= (0.5,0.40)$ estimated in Remark {\rm \ref{rmk:optEst_linear}}, we have $B^\delta_{b_1^*,\beta_1^*} =  \begin{pmatrix} 1-0.5z+0.03z^2 & \delta (1-0.16z+0.006z^2) \\ \frac{z}{\delta}(-1+0.15z) &1-0.5z+0.03z^2      \end{pmatrix}  $
with $z=\delta^2\Omega$. Thus, it can be verified directly that $\mathrm{det}(B^\delta_{b_1^*,\beta_1^*} )=1$, and its eigenvalues are $\lambda_{1,2} = a \pm \sqrt{a^2-1}$ with $a=1-0.5z+0.03z^2 $. Thus, to have $\mathrm{Real}(\lambda_{1,2})\leq 1$, we need $|a|\leq 1$, which implies either $0\le z\le \frac{20}{3}$ or $10\le z\le \frac{50}{3}$. Therefore, to ensure the linear stability as well as consistency, the largest time step of linear stability is $\delta^*\le \frac{\sqrt{20/3}}{\omega}$, which is $\sqrt{\frac{5}{3}}$ times the Verlet method's linear stability $\frac{2}{\omega}$. 
Therefore, the linear stability of NySALT scheme is improved.
\end{remark}

\subsection{Linear Langevin systems} \label{sec:lin-langevin}
We can estimate the optimal parameters from the analytical solutions of one-dimensional linear Langevin systems. Recall that for the governing equations
\[ 
d\mathbf{X}_t = A_\gamma\mathbf{X}_t + \sigma \begin{pmatrix}
0\\ d{\mbf{W}_t}\end{pmatrix} 
\]
with $A_\gamma = \bcm
0& 1\\ -\Omega, & -\gamma 
\ecm$, the exact solution is 
\begin{equation}\label{eq:Solu_stoc}
\mathbf{X}_{t_{i+1}} = e^{A_\gamma\delta} \mathbf{X}_{t_i} + \bm{W}_{t_i}^\delta, \quad \bm{W}_{t_i}^\delta= \sigma \int_{t_i}^{t_{i}+\delta} e^{A_\gamma(t_i+\delta - s)}\begin{pmatrix}
0\\ d\mW_s\end{pmatrix}. 
 \end{equation}
The Stochastic Symplectic Nystr\"{o}m scheme for this linear system gives,
\begin{equation}\label{eq:sto_Nys}
\mathbf{X}^N_{t+\delta} = B_{b_1,\beta_1,\gamma}^\delta   \mathbf{X}_{t} +\begin{pmatrix}
0\\ \xi_t^\delta \end{pmatrix},
\end{equation}
where $B_{b_1,\beta_1,\gamma}^\delta = \bcm
1& 0\\ 0 & e^{-\gamma \delta} 
\ecm B_{b_1,\beta_1}^\delta $ and $\xi_t^\delta = \sigma \int_t^{t+\delta} e^{-\gamma (t+\delta-s)}d\mW_s$ comes from \eqref{eq:Xi_data} that uses the increments of the Brownian motion. 
Then, the 1-step prediction error gives us the cost function 
\begin{equation}\label{eq:cost_linearSto}
\mcE_M(b_1,\beta_1) = \frac{1}{MN_t}\sum_m^M \sum_{i=1}^{N_t}  \| (e^{A_\gamma\delta} - B_{b_1,\beta_1,\gamma}^\delta ) \mathbf{X}^{(m)}_{t_i} + \xi_{t_i}^\delta - \bm{W}_{t_i}^\delta\|_{\Sigma^{-1}}^2. 
\end{equation}
\begin{remark}[Optimal parameter for the linear Langevin system] 
The minimizer of this cost function depends on $\gamma$ and the data, unlike the case of deterministic linear system. Fortunately, the noise term $\xi_{t_i}^\delta - \bm{W}_{t_i}^\delta$ is centered Gaussian and is independent of $\mathbf{X}^{(m)}_{t_i}$, thus, the minimizer is still mainly determined by the discrepancy matrix $\big(e^{A_\gamma\delta} - B_{b_1,\beta_1,\gamma}^\delta\big) $. The following computation shows that the minimizer of $\left \| B_{b_1,\beta_1,\gamma}^\delta -e^{A_\gamma \delta}\right\|_{\Sigma^{-1}}^2$ is about $b_1^*=0.5$ and $\beta_1^* \approx 0.40 -\frac{0.43\gamma}{\Omega\delta}$, when $\gamma$ is sufficiently small. Thus, the optimal parameters depend on $\gamma$ and $\delta$. 

The computation is based on the Taylor expansion of $e^{A_\gamma\delta}$ and  $B_{b_1,\beta_1,\gamma}^\delta$. 
Expand $B_{b_1,\beta_1,\gamma}^\delta$ up to the order of $\delta^3$, 
\begin{align*}
    B_{b_1,\beta_1,\gamma}^\delta &=\bcm 1 - \frac{1}{2}\delta^2\Omega  & \delta -\delta^3 \Omega(\beta_1 c_1+ \beta_2 c_2)  \\ 
 \left(-\delta\Omega + \delta^3 \Omega^2 b_2 a_{21}\right)(1-\delta\gamma+\frac{1}{2}\delta^2\gamma^2) & \left(1 - \frac{1}{2}\delta^2\Omega\right) (1-\delta\gamma+\frac{1}{2}\delta^2\gamma^2-\frac{1}{6}\delta^3\gamma^3) \ecm+O(\delta^4) \\ 
    &= I_2 + A_\gamma \delta + A_\gamma^2\frac{\delta^2}{2} + \bcm 0 & \gamma \\ \gamma\Omega & 0 \ecm\frac{\delta^2}{2} +
    \bcm 0 & -6\Omega(\beta_1c_1 + \beta_2c_2) \\ 6\Omega^2b_2a_{21}-3r^2\Omega & 3\gamma \Omega-\gamma^3\ecm  \frac{\delta^3}{6}+O(\delta^4).
\end{align*}
Similarly, expand $\exp(A_\gamma \delta)$ up to the order of $\delta^3$, 
\begin{align*}
    \exp(A_\gamma \delta) = I_2 + A_\gamma \delta + A_\gamma^2 \frac{\delta^2}{2} + \bcm \gamma\Omega & -\Omega+\gamma^2 \\ \Omega^2-\gamma^2\Omega & 2\gamma\Omega-\gamma^3 \ecm\frac{\delta^3}{6}  + O(\delta^4).
\end{align*}
Then the discrepancy matrix is approximately, 
\begin{align*}
 &  \left \| B_{b_1,\beta_1,\gamma}^\delta -\exp(A_\gamma \delta) \right\|_{\Sigma^{-1}}^2\\ 
 &\approx \left(\frac{\delta^2}{2}\right)^2\left\|\bcm 0 & \gamma \\ \gamma\Omega & 0 \ecm + 
    \frac{\delta}{3}\bcm - \gamma\Omega & \Omega(1-6(\beta_1c_1 + \beta_2c_2))-\gamma^2 \\
     \Omega^2(6b_2a_{21}-1)-2\gamma^2\Omega & \gamma\Omega\ecm \right \|_{\Sigma^{-1}}^2\\
     &= \frac{\delta^6}{36}\Omega^2 \left[\left(\left(\frac{6\beta_1^2}{b_1}+\frac{6\beta_2^2}{b_2}-2\right) +\frac{3\gamma}{\Omega\delta}\right)^2  +\left(\left(6b_2\beta_1-6b_1\beta_2-1\right)+\frac{3\gamma}{\Omega\delta}\right)^2\right] \\
     &=\frac{\delta^6}{36}\Omega^2 \left[ \left(\frac{6(\beta_1 -\frac{1}{2}b_1)^2}{b_1(1-b_1)}-\frac{1}{2}+ \frac{3\gamma}{\Omega\delta}\right)^2 + \left(6(\beta_1 -\frac{1}{2}b_1) -1 +\frac{3\gamma}{\Omega\delta}\right)^2\right].
\end{align*}
Assuming that $\frac{3\gamma}{\Omega\delta}\ll 1$, which holds for the underdamping Langevin dynamics when the damping term is small, we can find that the optimal parameter is $b_1^*=0.5$ and $\beta_1^* \approx 0.40 -\frac{0.43\gamma}{\Omega\delta}$. 
\end{remark}

\begin{remark}[Order of NySALT for the linear Langevin system] \label{rmk:orderSNO}
The local strong order of the NySALT in \eqref{eq:sto_Nys} is $O(\delta^{1.5})$. In fact, letting $\mathbf{X}_t^N = \mathbf{X}_t$ in \eqref{eq:Solu_stoc}--\eqref{eq:sto_Nys} and set $t=0$, we have
\begin{align*}
\E[|\mathbf{X}_{\delta} - \mathbf{X}_{\delta}^N|^2 ]^{1/2} 
&\leq \E[ \left \|  e^{A_\gamma\delta} - B_{b_1,\beta_1,\gamma}^\delta \right\|]\, \E[\|\mathbf{X}_0 \|^2]^{1/2}+ \left( \E[\left | \mathbf{W}^\delta_0 -\begin{pmatrix}
0\\ \xi_0^\delta \end{pmatrix} \right |^2 ]\right)^{1/2}. 
\end{align*}
The first term is of order $O(\delta^2)$, which follows from the above expansions.
 The second term is of order $O(\delta^{1.5})$ because with the notation $\Gamma =\bcm
0& 0\\ 0 & -\gamma 
\ecm $ and $A = \bcm
0& 1\\ -\Omega & 0 
\ecm$, 
 \[
\bm{W}_0^\delta - \begin{pmatrix}
0\\ \xi_0^\delta \end{pmatrix} = \sigma \int_{0}^{\delta} \left[ e^{A_\gamma(\delta - s)} -e^{ \Gamma(\delta -s)}
\right] 
\begin{pmatrix}
0\\ d\mW_s\end{pmatrix} 
= \sigma \int_{0}^{\delta} \left[ e^{A(\delta - s)} 
- I \right] 
e^{ \Gamma(\delta -s)} \begin{pmatrix}
0\\ d\mW_s\end{pmatrix},
\]
whose dominating component is $\sigma  \int_0^\delta A s e^{\Gamma s} d\mW_s $, a term with order $O(\delta^{1.5})$. 
Therefore, the local order of the NySALT scheme for the linear system is $O(\delta^{1.5})$. 
\end{remark}


\section{The benchmark problems: Fermi-Pasta-Ulam (FPU) model}\label{sec:numFPU}

In this section, we examine the performance of NySALT scheme on two benchmark nonlinear systems: Hamiltonian systems with the FPU potential (the deterministic FPU) and Langevin dynamics with the FPU potential (the stochastic FPU). Numerical results show that the inference is robust: the estimators are independent of the fine data generators, they converge as the number of trajectories increases, and they stabilizes fast (within a dozens of trajectories). NySALT scheme is efficient and accurate: it provides integrators adaptive to large time step size, improving the accuracy of solutions and enlarging the admissible time step size of stability, often quadruples those of the classical schemes, with minimum cost of training.

\subsection{The FPU system}\label{sec:FPU_def}
The FPU (Fermi-Pasta-Ulam) system \cite{fermi1955studies} presents highly oscillatory nonlinear dynamics. It 
consists of a chain of $2(m+1)$ mass points, connected with alternating soft nonlinear and stiff linear springs, and fixed at the end points \cite{Hairer2006}. The variables $q_1,\dots,q_{2m}$ (with $q_0=q_{2m+1}=0$) denote the displacements of the moving mass points, and $p_i$ denote their velocities.
The motion is described by a Hamiltonian system 
with the Hamiltonian $H$ given by 
\begin{equation}\label{eq:FPU-Hamiltonian}
H(\mbf{p},\mbf{q}) =K(\mbf{p})+V(\mbf{q})= \frac{1}{2}\sum_{i=1}^m (p_{2i-1}^2 + p_{2i}^2) + \frac{\omega^2}{4}\sum_{i=1}^m (q_{2i}-q_{2i-1})^2 + \sum_{i=0}^m (q_{2i+1}-q_{2i})^4. 
 \end{equation}
Here $\omega$ represents the stiffness of the system. We consider the system with $m=3$ and $\omega=50$. 

This nonlinear system is a benchmark problem for symplectic or quasi-symplectic integrators, which aim to produce stable and qualitatively correct simulations \cite{Hairer2006}. As discussed in Section \ref{sec:DetNstrom} and in Figure \ref{Fig:FPU_linear_stable}, the popular Str\"{o}mer--Verlet method can only tolerate a limited time step size when the system is stiff, otherwise the it leads to qualitatively incorrect energies. 
Here the quantities of interest are the energy of each stiff spring and their total stiff energy. More specifically, with a change of variables for $i=1,\dots, m$,

\begin{align} \label{eq:pos_stiff}
x_{0,i}:=\frac{q_{2i}+q_{2i-1}}{\sqrt{2}},\quad 
&  x_{1,i}:=\frac{q_{2i}-q_{2i-1}}{\sqrt{2}},\\ \label{eq:vel_stiff}
y_{0,i}:=\frac{p_{2i}+p_{2i-1}}{\sqrt{2}},\quad 
&  y_{1,i}:=\frac{p_{2i}-p_{2i-1}}{\sqrt{2}},
\end{align}
   where $x_{0,i}$ represents a scaled displacement of the $i$th stiff spring,
$x_{1,i}$ a scaled expansion (or compression) of the $i$th stiff spring, and $y_{0,i}$, $y_{1,i}$ their
velocities.  The total stiff energy and the energy of the $j$th stiff spring are
\begin{equation}\label{eq:stiff_energy}
 I:=\sum_{j=1}^{m}I_j, \text{ where }   I_j(x_{1,j},y_{1,j}):=
    \frac{1}{2}\left(y_{1,j}^2+\omega^2x_{1,j}^2\right),\quad j=1,\dots, m. 
\end{equation}

\paragraph{Properties of the deterministic FPU.} 
For a large $\omega$, the deterministic FPU model analytically exhibits behaviour dependent on initial data and time scales \cite{Hairer2006}. 

Depending on the initial condition, the system can present either close to linear or highly nonlinear dynamics. It behaves close to a linear system when the initial state is dominated by the stiff springs, that is when the total energy of stiff springs is of order $O(1)$ and the total energy of soft springs is less or of the same order. The system behaves nonlinearly when the initial state is mixed with both stiff and soft springs, which happens when the total energy of stiff springs is of order $O(1)$ and the total energy of soft springs is of order $O(\omega^2)$ or bigger. Numerical tests show that even trained only from one type of these initial conditions, the NySALT scheme can predict the dynamics of the other type of initial conditions. 

The FPU system shows dynamics varying with time scales as well. When the system starts from the nearly harmonic state (i.e., the first case of initial conditions), it will behave differently as the time evolves \cite{Hairer2006}: 
\begin{itemize}
    \item \textbf{short time scale $\omega^{-1}$.} The vibration of the stiff linear springs is nearly harmonic.
    \item \textbf{median time scale $\omega^{0}$.} This is the time scale of the motion of the soft nonlinear springs.
    \item \textbf{long time scale $\omega^{1}$.} Slow energy exchange among the stiff springs takes place on this time scale.
\end{itemize}
We will test the NySALT scheme \eqref{flow_map_deter} in these three time scales. 

\paragraph{Properties of the stochastic FPU.}

Stochastic perturbations can help simulate qualitatively the long time chaotic effect of the deterministic nonlinear model. Thus, 
Stochastic FPU models have been used to study the thermal conductivity and transport \cite{basile2006momentum,Roy2012,zhu2021effective}, asymptotic properties \cite{Schmid2020NonlinearityAT} and the stochastic resonance \cite{Miloshevich2009}. We consider a stochastic FPU with an additive white noise on the velocity and with a fraction. The noise injects energy while the friction dissipates energy, introducing random fluctuations to the energies. When they are relative small compared to the Hamiltonian, the stochastic FPU has dynamical properties similar to those of the deterministic system in terms of dependence on the initial data and the time scales. However, the total energy can fluctuate significantly larger than the total energy of the deterministic system, as we shown in Figure \ref{Fig:SFPU_gap200}. The stochastic FPU model is ergodic (see e.g.,\cite{MST10} and \cite[Proposition 6.1]{pavliotis2014stochastic}). Thus,  we will examine the NySALT and BAOAB schemes on producing the statistics of the energies, such as the time auto-covariance functions (ACF) and the empirical distributions (PDF).

\subsection{NySALT for the deterministic FPU}
\label{sec:detFPU_num}
We examine two aspects of the NySALT: the robustness of the inference and its numerical performance as an integrator for large time-stepping.
\paragraph{Numerical settings.} 
Unless otherwise specified, the numerical setting are as follows. 
We estimate the parameter $(b_1^*,\beta_1^*)$ from $M=100$ short trajectories on the training time interval $[0,T_{\text{tr}}]$ with $T_{\text{tr}}=1/2$ as described in Section~\ref{sec:flowDet}. Therefore, $T_{\text{tr}}$ is in the time scale $\omega^0$. The initial conditions are sampled according to
 \begin{equation}\label{init_conditions}
    \begin{split}
    &\text{soft spring: }x_{0,i}(0)= 1,\;\; y_{0,i}(0)=1,\\
     &\text{stiff spring: }x_{1,i}(0)= 1/\omega+\zeta_i,\;\; y_{1,i}(0)=1+\eta_i,
    \end{split}
 \end{equation}
where $\zeta_i$ and $\eta_i$ are independent Gaussian random variables with distribution $\frac{1}{\omega}\, \mathcal{N}(0,1)$.
 This initial distribution covers the regions that the entire system is nearly harmonic at the beginning of evolution. The data trajectories, recorded as time instants $t_n=n\delta$, are generated by the St\"{o}rmer--Verlet with a fine time step $h = 1\mathrm{e}{-4}$, except when testing the dependence on the symplectic integrator. The step size $\delta = \Gap \times h$ is much larger than $h$, and we will test $\Gap$ in several ranges. The optimal parameter is computed by constrained optimization with the interior point method with the loss function in \eqref{eq:loss_det}.

\paragraph{Robustness of the inference.} The NySALT depends on data by design. Thus, it will depend on the system generating data and its parameters converge as the data size increases. Importantly, it does not depend on the numerical integrator generating the accurate fine data for training. We examine them numerically below. 
\begin{itemize}[leftmargin=*]\setlength\itemsep{-1mm}
\item{Robustness to data generator.} We show first that NySALT is robust to the data generator. That is, the inferred parameter does not depend on the integrator generating the training data, as long as the integrator is accurate, which is realized by utilizing a sufficiently small time step $h$ and by using only short trajectories so that the accumulated numerical error is small. 

Table~\ref{Table:vary_fine_data} shows that  the estimated parameter are the same for three integrators, indicating the robustness of NySALT to the data generator. The three integrators are from the two-step Nystr{\"{o}}m family and one of them is the St{\"{o}}rmer–-Verlet method.  To ensure that numerical error in data is negligible, we use $h=1\mathrm{e}{-6}$. Since these integrators are second order $O(h^2)$ methods, their numerical error in the training interval $[0,T_{\text{tr}}]$ of order $O(10^{-12})$. The NySALT has time step $\delta = \Gap \times h$ with  $\Gap=\{1000;\;  5000; \; 10,000\}$, that is $\delta =\{0.001;\;  0.005;\;  0.01\}$.

\begin{table}[htp!]
    \centering
    \begin{tabular}{c|c|c|c}
    \hline
        {}&Parameter of data generator & Opt $\beta_1^*$ & Opt $b_1^*$  \\
        \hline
       $\Gap =1,000$ & $b_1^{\text{F}}=2/3$; $\beta_1^{\text{F}}=1/3$ & $0.403$ &$0.499$ \\
       {$\Gap =5,000$}& {} &  $0.403$ &$0.500$  \\
         {$\Gap =10,000$}& {}& $0.402$ &$0.499$  \\
          \hline
         {$\Gap =1,000$}& $b_1^{\text{F}}=1/3$; $\beta_1^{\text{F}}=1/3$ & $0.403$ &$0.499$\\
       {$\Gap =5,000$}& {} & $0.403$ &$0.500$ \\
         {$\Gap =10,000$}& {} & $0.402$ &$0.499$  \\
         \hline
         {$\Gap =1,000$}& $b_1^{\text{F}}=1/2$; $\beta_1^{\text{F}}=1/2$ & $0.403$ &$0.499$ \\
       {$\Gap =5,000$}& {} & $0.403$ &$0.500$ \\
         {$\Gap =10,000$}& {} & $0.402$ &$0.499$  \\
         \hline
    \end{tabular}
    \caption{Inferred parameters from data sets generated by three Nystr{\"{o}}m integrators with parameter $(b_1^{\text{F}},\,\beta_1^{\text{F}})$. The fine step size is $h=1\mathrm{e}{-6}$ and the training time is $T_{\text{tr}}=1/2$. The coarse step size is $\delta =\{0.001;\;  0.005;\;  0.01\}$, which corresponds to  $\Gap =\{1000;\; 5000;\;  10000\}$. 
    }
    \label{Table:vary_fine_data}
\end{table}
\item{Optimal parameters verse stiffness parameter $\omega$.} We examine next the dependence of the parameters on $\omega$, which determines the stiffness of the system.   
Here we test $\omega\in \{2,4,8, (10:10:100)\}$ with $M=100$. Since the linear stability of the St{\"{o}}rmer–-Verlet requires $\Delta t<\frac{2}{\omega}$, 
the coarse step is set to be $\delta=1/\omega$, which is half of the critical step size of linear stability. In comparison, we estimate the parameters of linear Hamiltonian systems \eqref{eq:Linear} with the same $\omega$ by minimizing $\|e^{A\delta}-B_{b_1,\beta_1}^{\delta}\|^2_{\Sigma^{-1}}$ in Proposition \ref{prop:linear_para}.  Figure~\ref{Fig:para_vs_omega} (left) shows that inferred parameters for FPU are close to those of the linear Hamiltonian system when $\omega\geq 30$. 

\begin{figure}[htp!]
    \centering
   \includegraphics[width = 0.44\textwidth]{./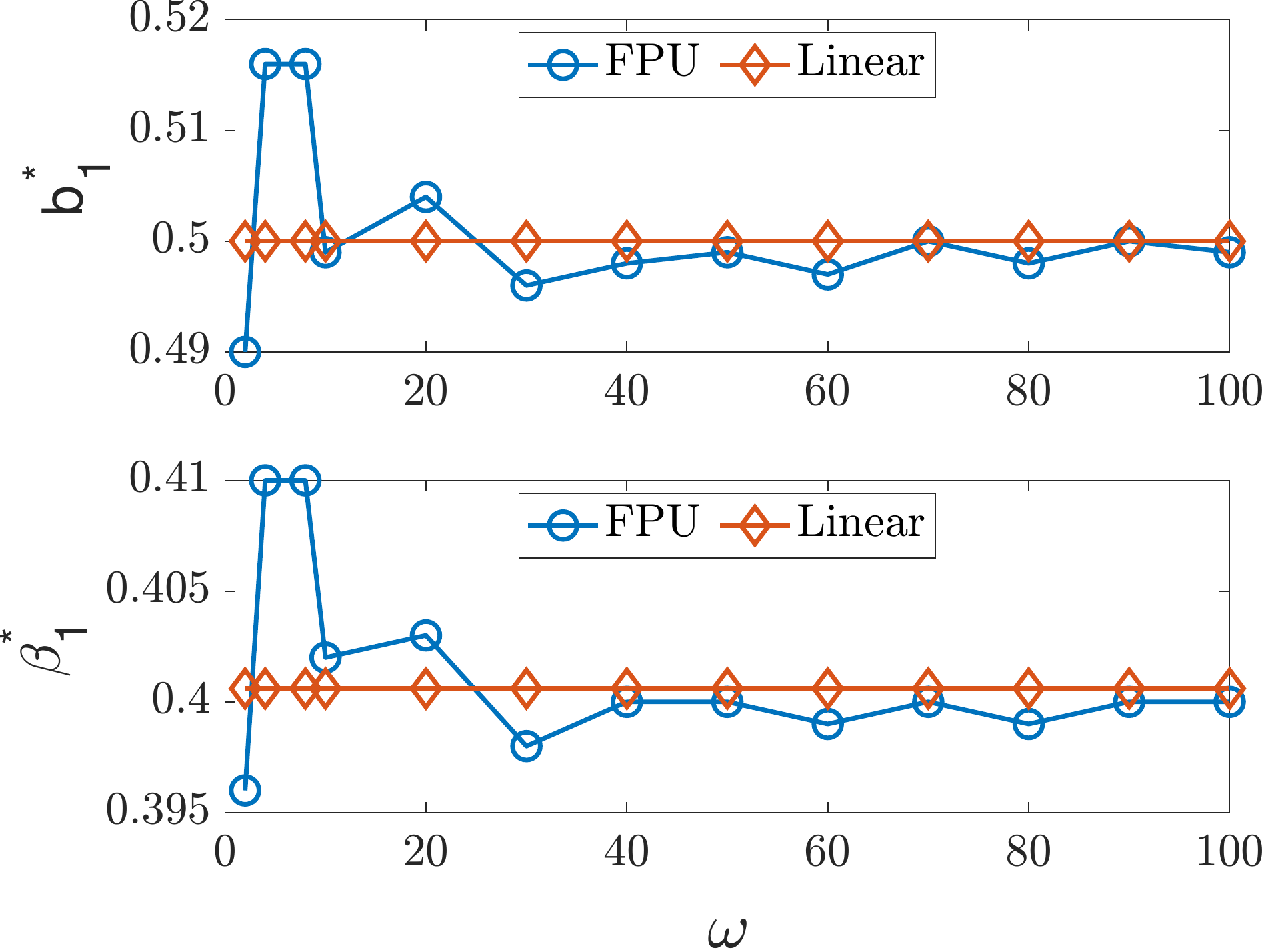}
   \includegraphics[width = 0.44\textwidth]{./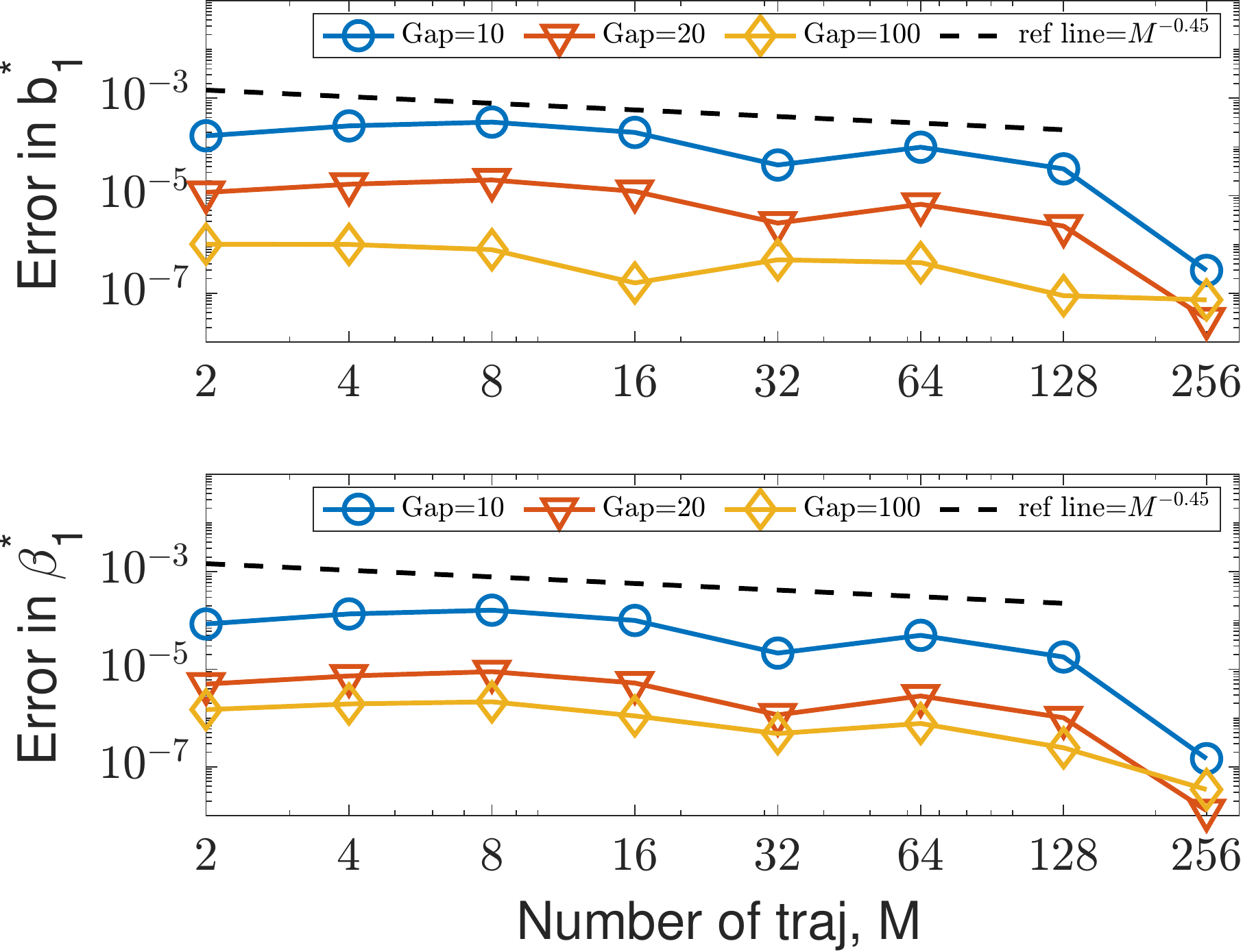} 
    \caption{ Robustness of the inference. {\bf Left:} Optimal parameters verse various stiffness values  $\omega$.  {\bf Right:} Convergence of estimators $(b_1^*, \, \beta_1^*)$ in numbers of sample trajectories $M$.    
    }
    \label{Fig:para_vs_omega}
\end{figure}

\item{Convergence in numbers of sample trajectories $M$.}
We examine next the convergence of the parameter estimator $\theta_M$ when the sample size $M$ increases with $\omega=50$.  Figure~\ref{Fig:para_vs_omega} (right) shows the error of the estimators with $M=2^{\{1:8\}}$ in comparison to the reference estimator with $M=2^9$. 
 As it can be seen, the estimator with $M=2$ is already close the reference values, with errors less than $10^{-3}$, and the error decays at a rate about $M^{-0.45}$, close to the theoretical rate in Theorem \ref{thm_convEst}. 
\end{itemize}

\paragraph{Numerical performance as an integrator for large stepping.} The NySALT provides an integrator adaptive to large time step size. Since it utilizes the optimal parameters adaptive to each step size, it improves the accuracy of the solution and enlarges the admissible time step size of accuracy, as the following numerical test demonstrates. 
\begin{itemize}[leftmargin=*]\setlength\itemsep{-1mm}
 \item {Improving the accuracy.}  Figure~\ref{Fig:accuracy_stab} (right) shows that the NySALT provides the most accurate solution for all time step sizes ranging from $\delta = \Gap \times h$ with $\Gap$ ranging in $\Gap\in \{(10:10:100),(150:50:350),390\}$, when comparing with the St\"{o}rmer--Verlet scheme. It presents the averaged relative Root-Mean-Square-Error (RMSE), which is averaged out over multiple trajectories, 
  \begin{equation}\label{avgRMSE_def}
     \text{Avg rel RMSE}:=\frac{1}{M}\sum_{m=1}^M \text{rel RMSE}^{(m)},
    \end{equation}
 where the relative RMSE in the $m$-th trajectory is defined as 
  \begin{align}\label{RMSE_def}
     &\text{rel RMSE}^{(m)}:=  \sqrt{\frac{1}{{N_t}}\sum_{i=1}^{N_t}\frac{\left(I^{\text{F},(m)}(t_i)- I^{ \text{C},(m)}(t_i)\right)^2}{\left(I^{\text{F},(m)}(t_i)\right)^2} }.
\end{align}
Here $I^{\mathrm{F},(m)}(t_i)$ denotes the total energy of the reference solution with  fine time step size  $h=1\mathrm{e}{-4}$ at time $t_i$ in the $m$-th trajectory, and similarly, $I^{\mathrm{C},(m)}(t_i)$ denotes the total energy of NySALT with  coarse time step $\delta= \Gap \times h$. The number of sample trajectories is $M=400$. 

We consider two time intervals $[0,T_{\text{test}}]$ with $T_{\text{test}}=0.5$ and $T_{\text{test}}=100$, representing the median and long time scale $O(\omega^0)$ and $O(\omega^1)$, respectively.  At $T_{\text{test}}=0.5$, as shown in top left of Figure~\ref{Fig:accuracy_stab}, both integrators have errors increasing linearly in $\delta = \Gap\times h$. The relative error by NySALT is  two magnitudes smaller than that by Verlet until around 
$\Gap=300$. The linear dependence with slope $2$ comes from the order $O(\delta^2)$ of the Nystr\"om methods. 
At $T_{\text{test}}=100$, as shown in bottom left of Figure~\ref{Fig:accuracy_stab}, the NySALT keeps the linear dependence of the relative error on $\delta=\Gap \times h$ up to $\Gap = 50$, doubling the reach of the Verlet method. 
Furthermore, up to the $\Gap =390$, the relative error of NySALT scheme is consistently smaller than that of the Verlet scheme. 
As a result and as we show next, NySALT can tolerate a larger time step size beyond the limitation of Verlet.

We further validate the accuracy of NySALT with a large time step by examining the transitions of energies in the long time scale $O(\omega^1)$. We consider sample $M=400$ trajectories with the time interval $[0,300]$. The right of 
Figure~\ref{Fig:accuracy_stab} shows that NySALT preserves the energy transition well, with errors significantly smaller than the suboptimal Nystr\"om integrator with parameters $b_1=0.45$ and $\beta_1=0.43$. The St{\"{o}}rmer--Verlet is not presented here because its errors are too large.   
Here we use the $L_1$ errors of the energies and phase angles to quantify the accuracy. The $L_1$ errors of the energies at time $t_i$ is computed as 
\begin{equation}\label{L1_tranErr}
    \text{Err}_{L^1}(t_i):=\frac{1}{I^{\text{F}}(t_i)} \sum_{j=1}^{3}\big|I_j^{\text{C}}(t_i)-I_j^{\text{F}}(t_i)\big|\cdot \delta 
\end{equation}
and the $L_1$ error of phase angles at time $t_i$
\begin{equation}\label{L1_angErr}
    \text{AngErr}_{L^1}(t_i):=\big|\vartheta^{\text{C}}(t_i) -\vartheta^{\text{F}}(t_i)\big|\cdot \delta+\big|\varphi^{\text{C}}(t_i)-\varphi^{\text{F}}(t_i)\big|\cdot \delta ,
\end{equation}
where the phase angles $\vartheta^{C}(t_i)$ and $\varphi^{C}(t_i)$ (and similarly $\vartheta^{F}$ and $\varphi^{F}$) are defined by
\[
\begin{split}
\vartheta^{C}(t_i):
=\mathsf{arccos}\left(\frac{\sqrt{I_3^{C}(t_i)} }{\sqrt{I^{C}(t_i)} }\right),
\quad 
\varphi^{C}(t_i)):
=\mathsf{arctan}\left(\frac{\sqrt{I_2^{C}(t_i)} }{\sqrt{I_1^{C}(t_i)} }\right). 
\end{split}
\]
The fine data for reference is generated by the St{\"{o}}rmer--Verlet with $h=1\mathrm{e}{-4}$. The coarse data are generated with $\delta =1/\omega$ (i.e., with $\Gap = 200$) by using the optimal and suboptimal parameters of Nystr\"{o}m methods.

\begin{figure}[htp!]
\centering
{\includegraphics[width = 0.44 \textwidth]{./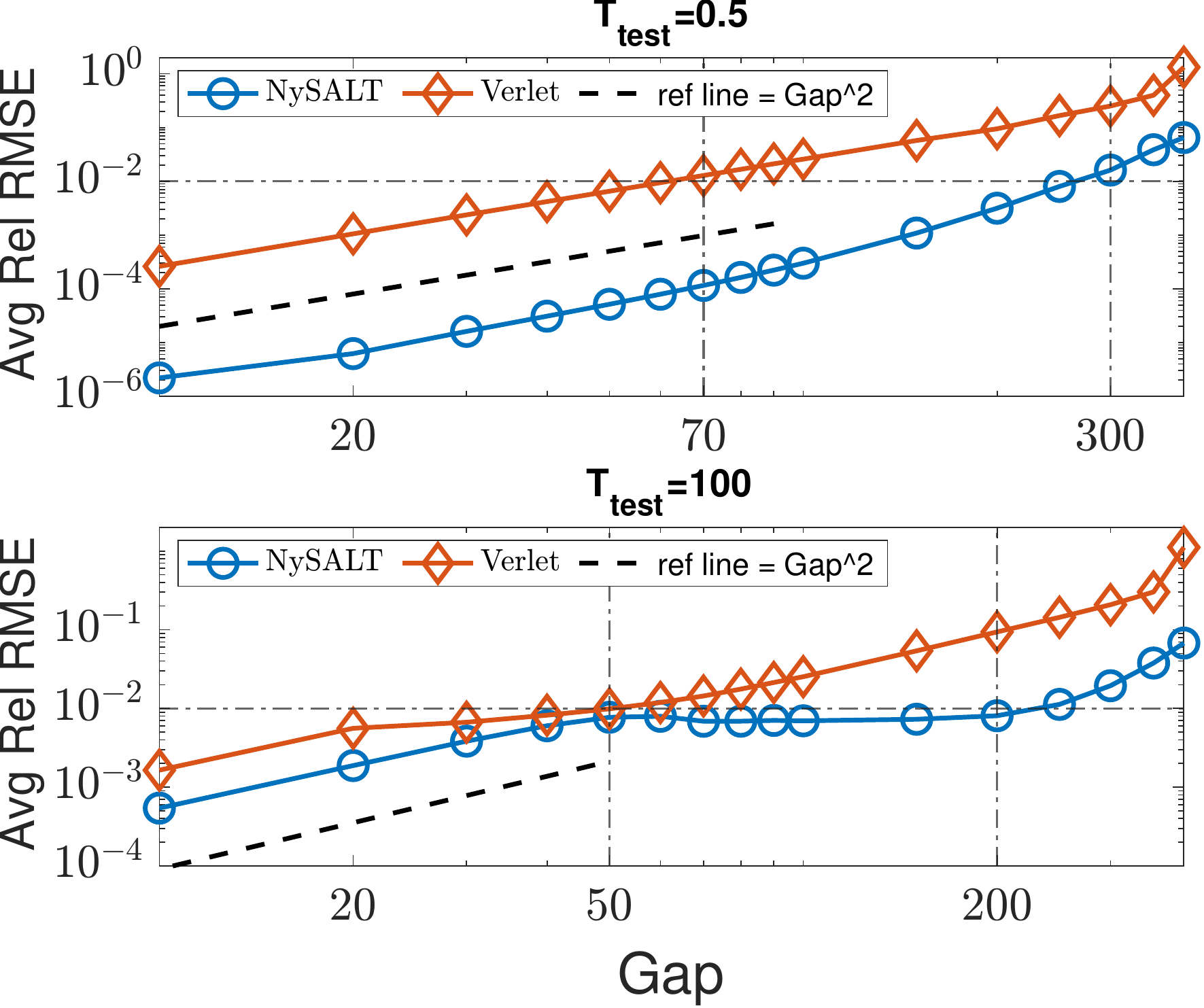}}
{\includegraphics[width = 0.44 \textwidth]{./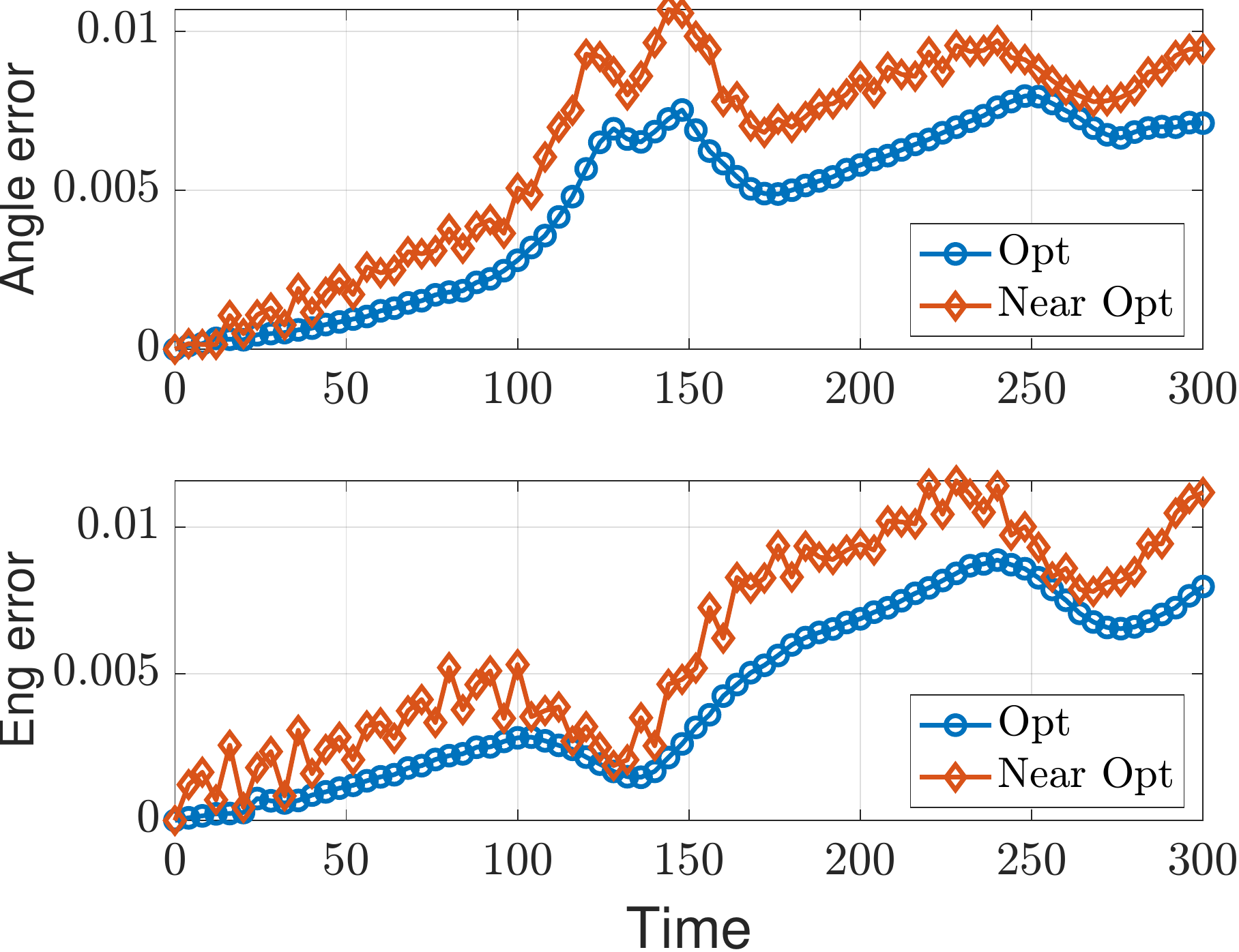}}
 \caption{Improving the accuracy. {\bf Left:} Averaged relative Root-Mean-Square-Error (Avg rel RMSE \eqref{avgRMSE_def} and \eqref{RMSE_def}) over total length $T_{\text{test}}$ between NySALT and St{\"{o}}rmer--Verlet schemes.  {\bf Right:} $L_1$ errors of the energies \eqref{L1_tranErr} and phase angles \eqref{L1_angErr} between NySALT and suboptimal Nystr\"{o}m schems. 
 }
    \label{Fig:accuracy_stab}
\end{figure}

\item{Enlarging the maximal admissible time step size.}
In  Figure~\ref{Fig:accuracy_stab} (top left) with the timescale of $O(\omega^{0})$, 
if we take threshold of 1\% average relative RMSE for $I$, the maximum gap in  St\"{o}rmer Verlet scheme allowed is 70, however, the maximum gap in NySALT scheme can reach at $\Gap=300$. Similarly in  Figure~\ref{Fig:accuracy_stab} (bottom left) with the timescale of $O(\omega^{1})$, the  maximum gaps allowed with 1\% average relative RMSE for both methods are 50 and 200. So NySALT scheme can enlarge at least four times of the maximal admissible step size of the St\"{o}rmer--Verlet scheme without lossing any accuracy. 

We demonstrate next that when $\delta=2/\omega$ (i.e., with $\Gap=400$), the linear stability limit of St\"ormer--Verlet,  NySLAT can remain stable while Verlet blows up.  
Figure~\ref{Fig:Fine_vs_coarse_DFPU} shows that the St{\"{o}}rmer--Verlet with coarse step size $\delta$ blows up almost immediately (within total time of 1), while the NySALT scheme remains stable and accurate and can capture the main patterns of the energy transfer up to total time of 150. Notice that the maximal admissible time step size of stability of the St{\"{o}}rmer--Verlet method is less than $2/\omega$, whereas NySALT can reach beyond it, reaching close (in additional tests) to $\sqrt{20/3}/\omega$,  which agrees the maximal admissible step size of linear stability in Remark \ref{rmk:linearStab}.

\begin{figure}[htp!]
    \centering
       \includegraphics[width =0.315\textwidth]{./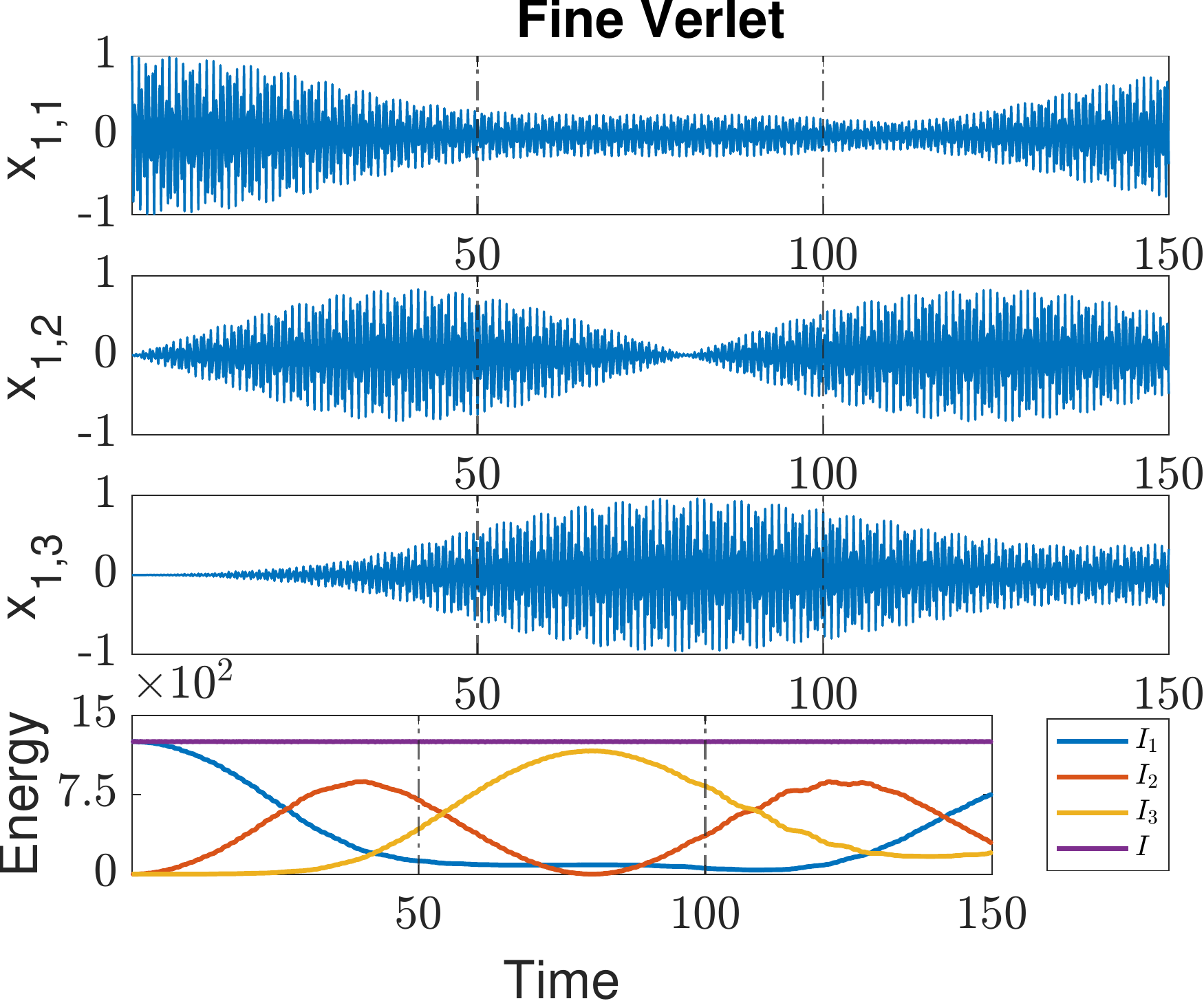}
      \includegraphics[width =0.3\textwidth]{./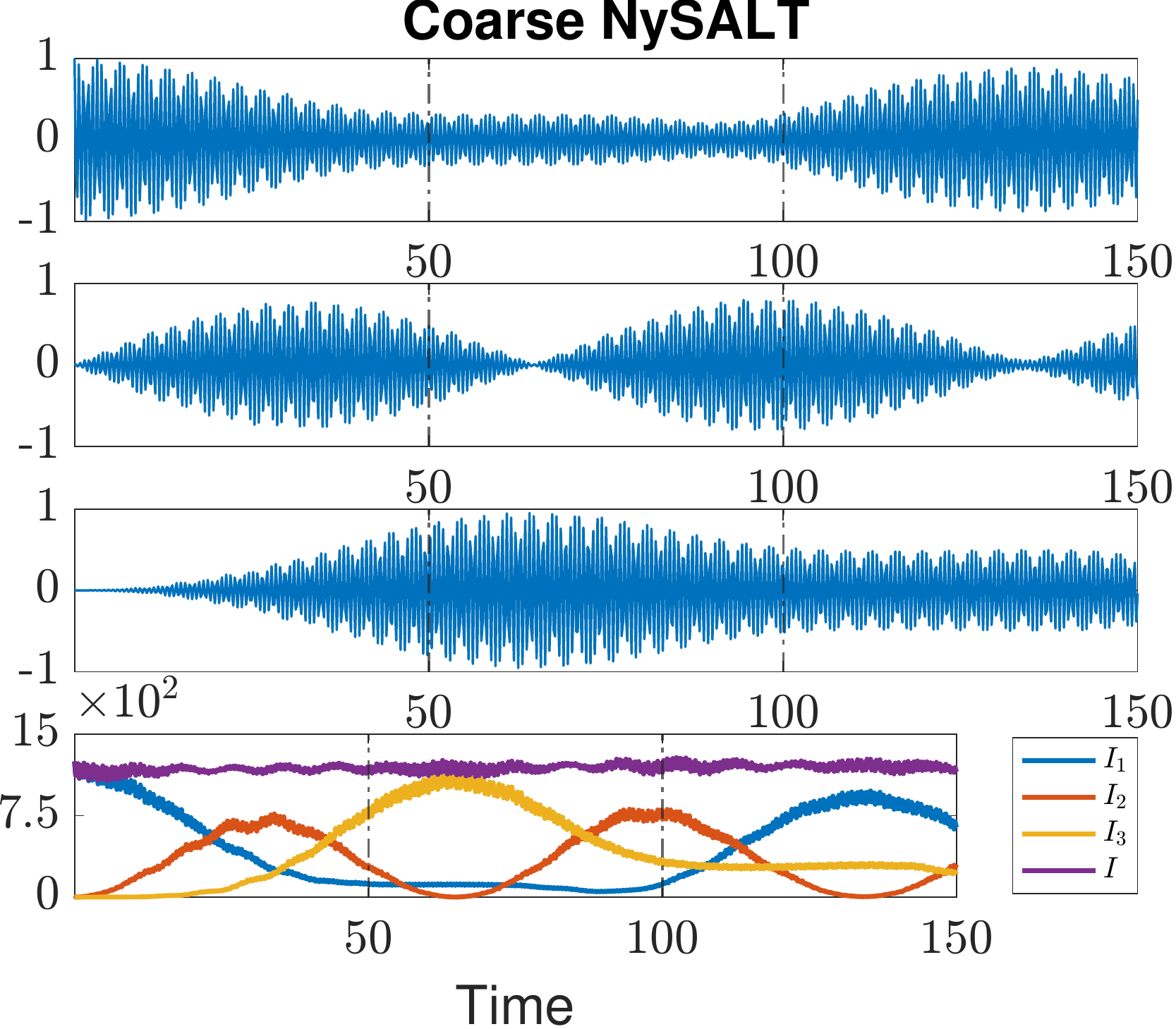}
      \includegraphics[width =0.3\textwidth]{./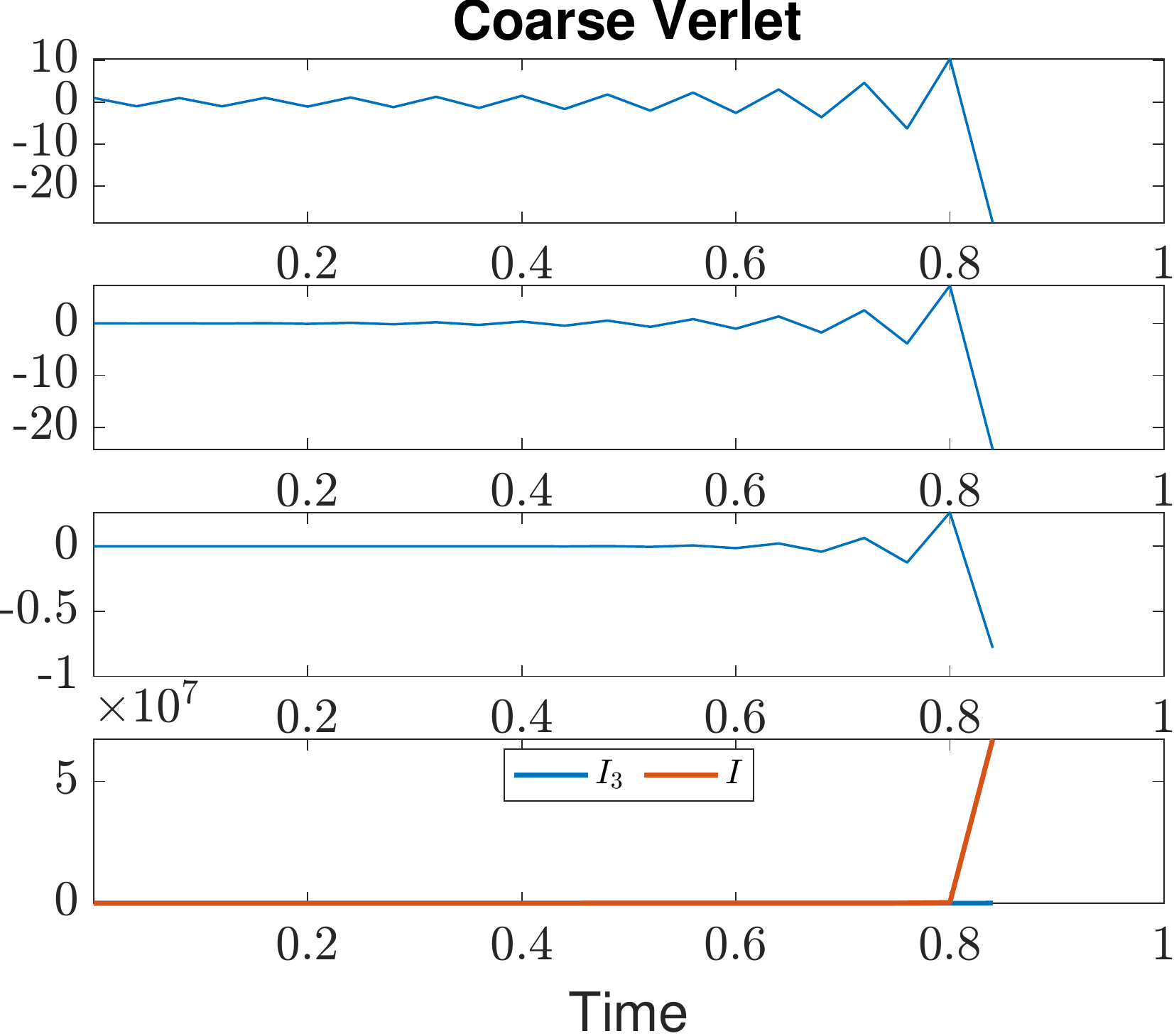}
    \caption{Large time-stepping near the linear stability limit. {\bf Left}, {\bf Middle} and {\bf Right} show the trajectories of scaled expansion of stiff springs $x_{1,i}$ \eqref{eq:pos_stiff} and stiff energies $I_i$ \eqref{eq:vel_stiff} generated by St{\"{o}}rmer--Verlet scheme with the fine step size $h$,  NySALT scheme with the coarse step size $\delta=400h$ and  St{\"{o}}rmer--Verlet scheme with the coarse step size $\delta=400h$. }
    \label{Fig:Fine_vs_coarse_DFPU}
\end{figure}

\end{itemize}

\subsection{NySALT for the stochastic FPU} \label{sec:stocFPU_num}
Similar to the deterministic example, we examine the stochastic NySALT scheme in terms of the robustness of its inference and its numerical performance as an integrator. 
\paragraph{Numerical settings.} 
We consider the Langevin dynamics with the same FPU potential  and the friction coefficient is $\gamma =0.01$, which is the underdamping case. The diffusion coefficient is $\sigma=0.05$. 
The optimal parameter $(b_1^*,\beta_1^*)$ are estimated by minimizing the loss function \eqref{eq:loss_sto} from $M= 512$ short trajectories on the training time interval $[0,T_{\text{tr}}]$ with $T_{\text{tr}}=1$ as described in Section~\ref{sec:flowSto}. In particular, the data trajectories consist of both the state $\mX_t$ and the stochastic force $\mW_t$ and they are generated by the BAOAB scheme with the fine time step size $h = 1\mathrm{e}{-4}$. We downsample the state trajectories at time instants  $t_n=n\delta$, and approximate the one-step stochastic increment at time instants $t_n$ by \eqref{eq:Xi_data}.  
The coarse time step size $\delta = \Gap \times h$ is much larger than $h$, with $\Gap$ ranging from 10 to 450. The initial conditions are uniformly sampled from a single long simulated trajectory of total time of $T=25000$.

\paragraph{Robustness of the inference.} The optimal estimators of NySALT scheme still stabilize very fast with small variations between different datasets. Figure \ref{fig:inf_sto} (Left) presents the absolute errors of the estimators with $M=2^{\{2:9\}}$, where the reference estimator is computed from $\overline{M} = 1024$ trajectories. 
 From the figure, both estimators with $M=512$ is close to the reference values, with absolute errors less than $10^{-2}$, and the error decays at rate about $M^{-0.44}$, which are close to theoretical rate in Theorem \ref{thm_convEst} as well.  Figure \ref{fig:inf_sto} (Right) further shows the mean and errorbar of the estimators at different time gaps. Both estimators are estimated with $M=512$ trajectories. The runtime analysis shows it takes about 854 seconds on average to learn the estimators at each gap. 
 We repeat the inference procedure independently 10 times to assess the variability over the random generated data. The mean of both estimators are close to optimal parameters in the linear Langevin system in sec.\ref{sec:lin-langevin}.
 Due to the small variance, the errorbar is barely noticeable. 

\begin{figure}[htp!]
    \centering
\includegraphics[width =0.45\textwidth]{./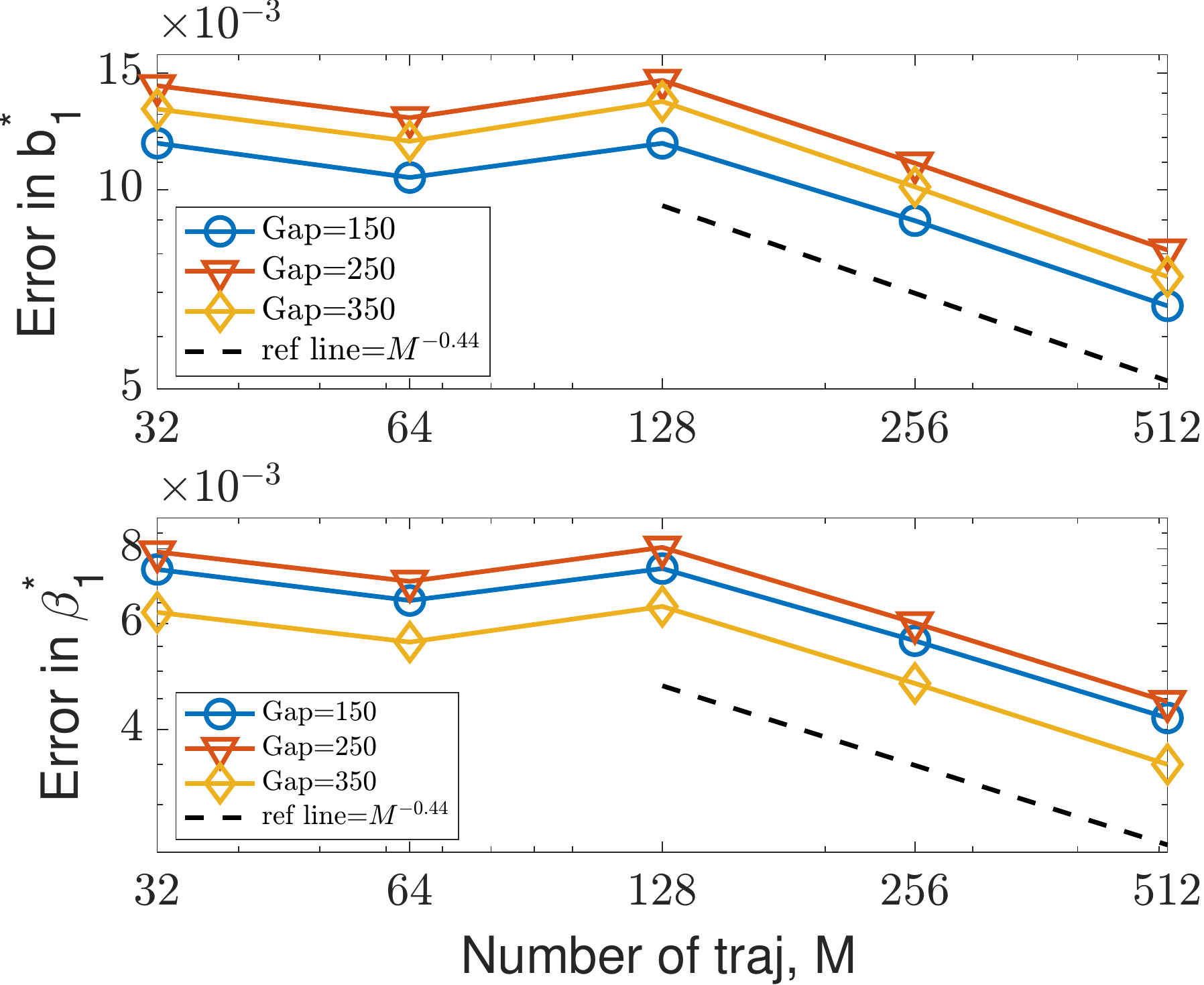}
\includegraphics[width =0.45\textwidth]{./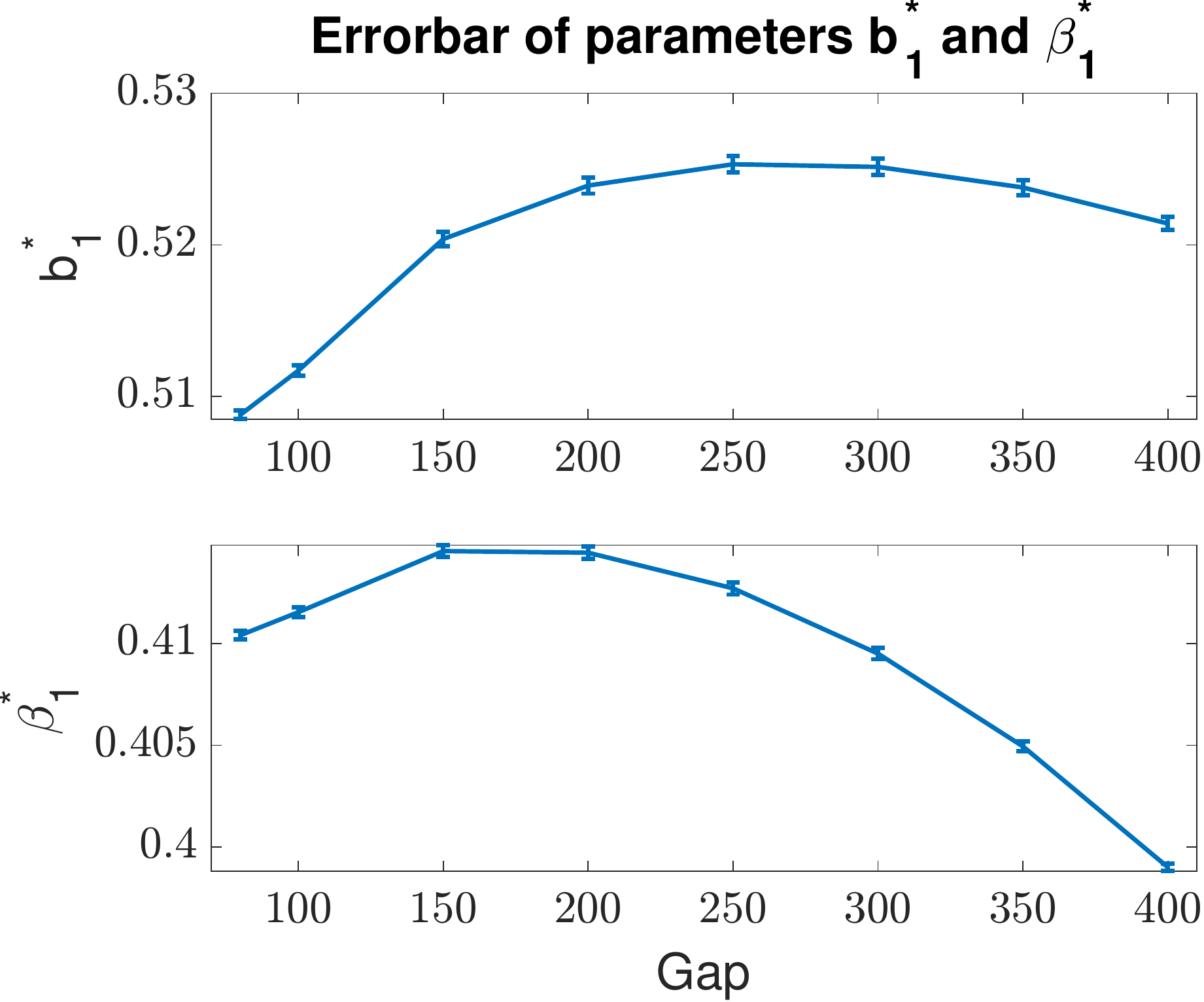}
   \caption{Robustness of estimators. \textbf{Left:} Convergence of parameters as number of trajectories $M$ increases.   \textbf{Right:} Mean and errorbar of estimators at different gaps in 10 independent simulations. 
    }
    \label{fig:inf_sto}
\end{figure}

 \begin{figure}[htp!]
    \centering
\includegraphics[width =0.32\textwidth]{./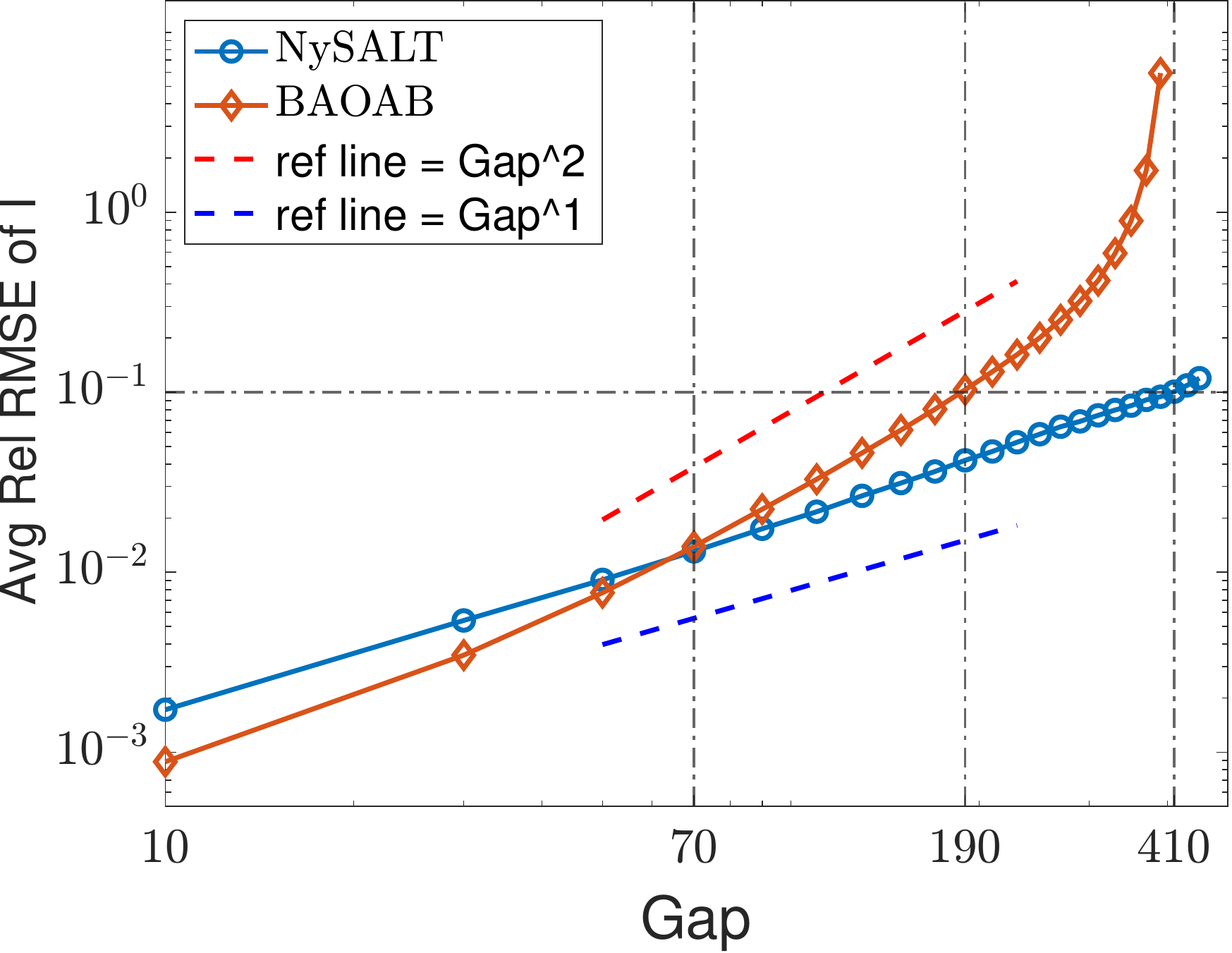}
\includegraphics[width =0.33\textwidth]{./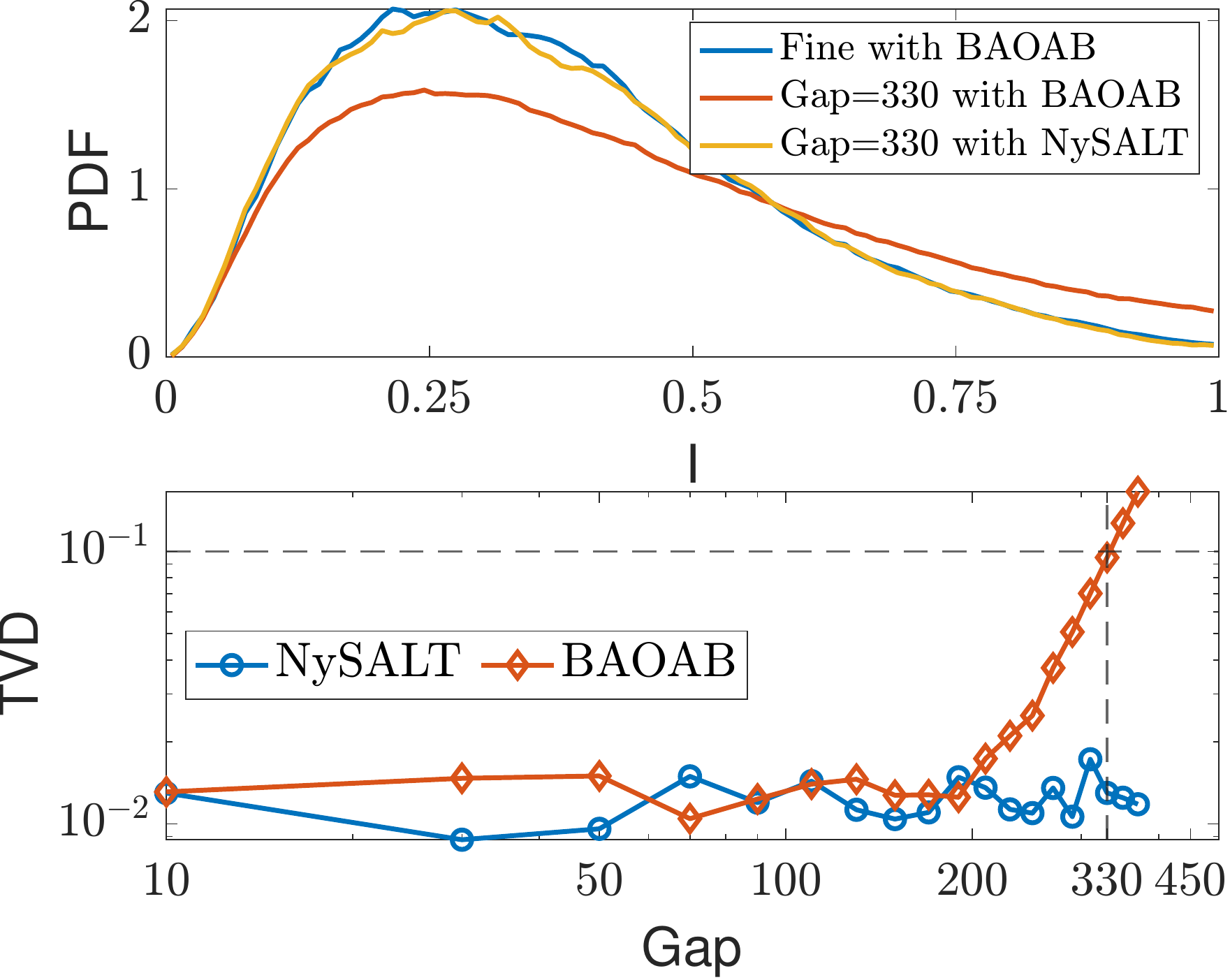}
\includegraphics[width =0.33\textwidth]{./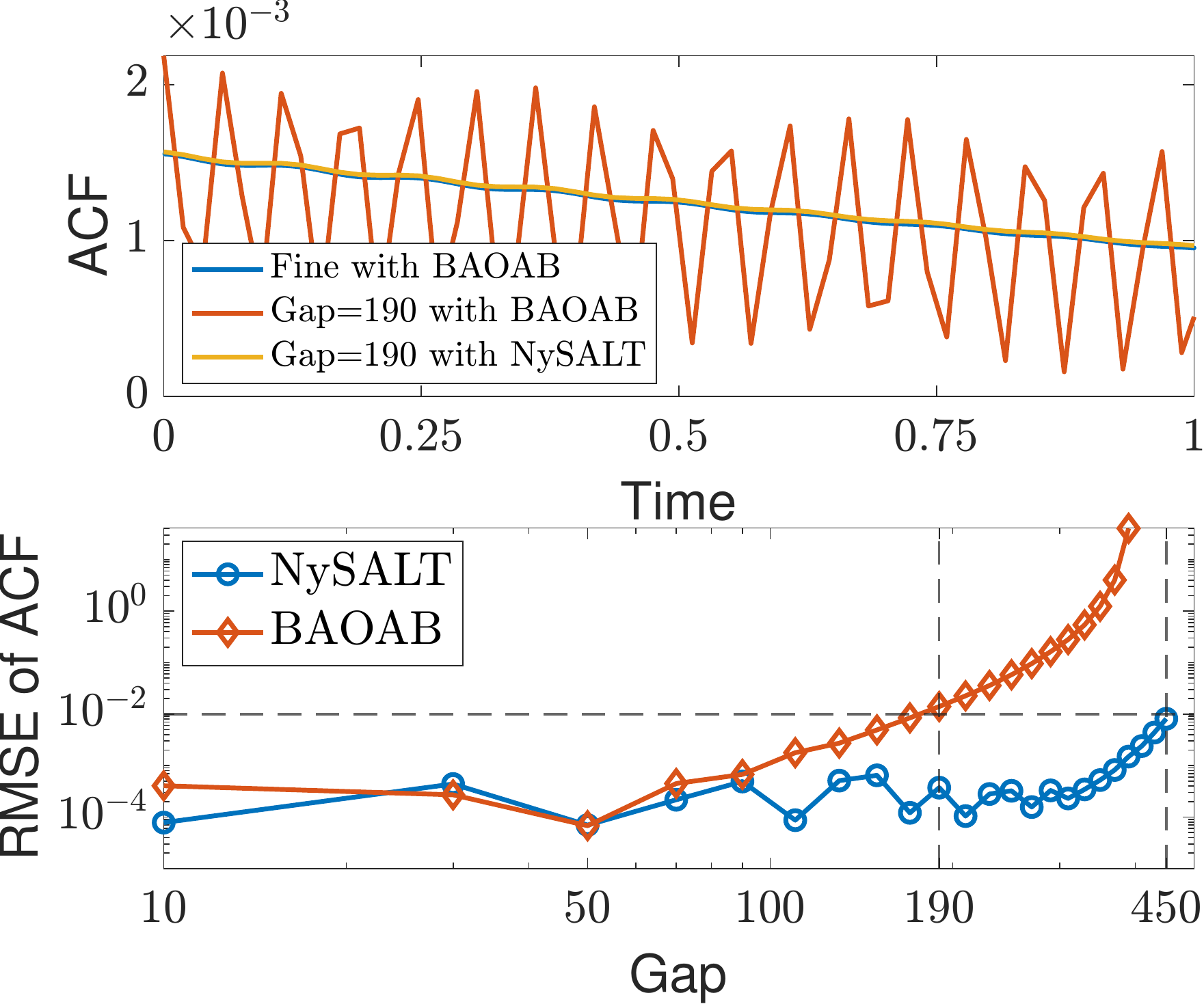}
   \caption{Performance of the NySALT scheme, in comparison with the BAOAB scheme. \textbf{Left:} Average relative RMSE (in \eqref{avgRMSE_def} and \eqref{RMSE_def}) of the total stiff energy $I$ over total length $T_{\text{test}}=1$. \textbf{Middle:} The empirical distributions (PDF) and their total variation distances of both schemes at various coarse time steps.  \textbf{Right:}  The time auto-covariance functions (ACF) and their RMSE of both schemes at various coarse time steps.
    }
    \label{fig:num_performance_sto}
\end{figure}

\paragraph{Numerical performance as an integrator.} The NySALT scheme has parameters adaptive to the time step size. Thus, like the deterministic FPU, it can tolerate relatively large time step when compared to a classical integrator, as verified by Figure \ref{fig:num_performance_sto}. Here we compare the NySALT scheme with BAOAB scheme, the state-of-the-art symplectic integrator in twofold: average relative RMSE in short time scale and statistics in long time scale. In the current setting, we only compare the results in terms of the total stiff energy $I$. All the parameters at various coarse time steps are estimated with $M=512$ sample trajectories.  

\begin{itemize}[leftmargin=*]\setlength\itemsep{-1mm}
\item{Average relative RMSE in short time scale.} We consider the time interval $[0, T_{\text{test}}]$ with $T_{\text{test}}=1$ and the number of sample trajectories $M=10000$. Both the NySALT and the BAOAB schemes integrate at the coarse time step $\delta=\Gap\times h$ for $\Gap$ ranging from 10 to 450, with the same coarse grained stochastic force $\xi_{t_i}$ generated by \eqref{eq:Xi_data} from $\mW_{t}$. Their solutions are compared with the reference solution generated by the BAOAB scheme with fine time step $h=10^{-4}$, with the same stochastic force $\mW_{t}$.  Figure \ref{fig:num_performance_sto} (Left) shows the average relative RMSE of the total stiff energy in both schemes, where average relative RMSE is defined in \eqref{avgRMSE_def} and \eqref{RMSE_def}. In log scale, the error of BAOBA scheme keeps the linear dependence until $\Gap=200$ with the slope 2, thereafter grows superlinearly.  However, NySALT scheme stretches the linear dependence to $\Gap=450$ with the slope 1. The error of NySALT scheme is consistently smaller than that of BAOAB after $\Gap=70$, which corroborates our goal of large time-stepping. If we take threshold of 10\% average relative RMSE, the maximum gap of BAOAB allowed is 70, while the maximum gap in NySALT can reach at $\Gap=190$.

\item{Statistics in long time scale.} Since the system is stochastic, we 
focus on statistics of long time trajectories, such as,  empirical distributions (PDF) and  auto-covariance functions (ACF). We consider the time interval $[0, T_{\text{test}}]$ with $T_{\text{test}}=40$ and the number of sample trajectories $M=10000$.  Similar to previous simulation, we integrate both schemes with the coarse time steps, whose gap ranging from 10 to 450. But the stochastic force in both schemes are not the same.  We estimate the PDF and ACF of the total stiff energy $I$ for different gaps. The empirical distribution is sampled with the 100 equal width bins in $[0,1]$ and 
ACF at time $\tau$ is defined
\begin{align} \label{eq:ACF}
\text{ACF}(\tau)=\mathbb{E}[I_t\bar{I}_{t+\tau}]-\mathbb{E}[I_t]\mathbb{E}[\bar{I}_{t+\tau}]
\end{align}
with $\tau\in[0,1]$.  
Similarly, these PDF and ACF are compared with the reference solutions generated by BAOAB scheme with fine step size.  We use the total variance distance (TVD) as the metric to quantify the deviation from the reference empirical measure. 
The total variance distance (TVD) between the empirical measure $P$ at coarse step size and the empirical measure $Q$ at fine step size is defined as
\begin{align}
    \text{TVD}(P,Q) = \frac{1}{2}\|P-Q\|_1.
\end{align}
On the other hand, we use the RMSE as the metric to measure the error of ACF. 

Figure \ref{fig:num_performance_sto} (Middle top) shows that at $\Gap=330$, NySALT scheme accurately reproduces the empirical distribution, whereas BAOAB scheme deviates largely from the reference due to the large time step.  
Figure \ref{fig:num_performance_sto} (Middle bottom)  shows that the NySALT scheme  has consistently smaller TVD than the BAOAB scheme after $\Gap>200$, remaining almost unchanged (around $10^{-2}$) even at $\Gap =450$. In particular, the TVD of BAOAB scheme at $\Gap=330$ is $10^{-1}$, which is one magnitude larger than that of NySALT.

Figure \ref{fig:num_performance_sto} (Right) shows the comparisons of the ACFs, with the top figure shows the ACFs when $\Gap=190$ and the bottom figure shows the RMSEs of the ACFs for both schemes with a ranges of gaps. The right top figure shows that at medium gap $\Gap= 190$, NySALT scheme produces an ACF almost exactly as the reference generated by BAOAB with  fine time step, in comparison,  BAOAB scheme with the same step size produces an ACF with significantly larger oscillations. Furthermore, the right bottom figure shows that the error of BAOAB scheme grows exponentially when $\Gap >100$, while NySALT scheme remains accurate until about $\Gap =400$. So the maximum admissible time step size of NySALT scheme almost quadruples that of BAOAB scheme. 

In addition, NySALT scheme significantly reduces the computational cost. For example, to compute the ACF by $M=10000$ sample trajectories, the run-time of the BAOAB scheme with fine step size is about 2078 seconds, whereas the NySALT with medium step size $\Gap=190$ only takes 18 seconds. It is almost 115 times faster. Even we take into account of the training time (which is about 854 seconds), it is still significantly better to use NySALT scheme. 
\end{itemize}

\section{Conclusion}
We have proposed and investigated a parametric inference approach to \emph{innovate classical numerical integrators} to obtain a new integrator which is tailored for each time step size and the specific system. 
In particular, we introduce NySALT, a Nystr\"{o}m-type inference-based schemes adaptive to large time-stepping. The framework of constructing inference-based schemes from data has the major advantages:   \vspace{-1mm}
\begin{itemize}[leftmargin=*]\setlength\itemsep{-1mm}
\item Compared to the generic classical numerical integrators, the inferred scheme with optimal parameters enlarges the maximal admissible while maintaining similar levels of accuracy. 
\item The parametric inference is robust regardless data generation and is immune to curse-of-dimensionality or overfitting. Moreover, the scheme is generalizable beyond the training set for autonomous systems. 
\item The convergence of the estimators can be rigorously proved as data size increases.
\end{itemize}
We demonstrate the performance of the NySALT on both Hamiltonian and Langevin system via the Fermi-Pasta-Ulam (FPU) potential. Numerical results verify the convergence of the estimators. Furthermore, they show that NySALT quadruples the time step size for the Hamiltonian system and quadruples that for the Langevin system when comparing with the St\"{o}rmer--Verlet and the BAOAB to maintain the average relative RMSE within certain level.

Meanwhile, NySALT scheme still has a limited maximal time step size, which is inherited from the classical integrator. The whole idea of NySALT can be easily extended to other family of the integrators. 
In the future work, we will investigate improved approximation of the flow map by using new parametric forms or non-parametric learning to further extend time step size.

\paragraph{Acknowledgements} X. Li’s is grateful for partial support by the National Science Foundation Award DMS-1847770. F. Lu's is grateful for partial support by the NSF Award DMS-1913243. M. Tao is grateful for partial support by the NSF DMS-1847802, NSF ECCS-1936776, and the Cullen-Peck Scholar Award. F. Ye is grateful for partial support by the AMS-Simons travel grants. 

\appendix

\section{Derivative of the cost function }\label{appendix-A}
We provide here the detailed computation of the derivative of the cost function in \eqref{eq:loss_sto}. 
Recall that with a given time step $h$, the Stochastic Symplectic Nystr\"{o}m scheme consists of two components: a symplectic 2-step Nystr\"{o}m scheme that integrates the Hamiltonian part:
\[
\tilde{\mX}_{n+1} = S^h_{b_1,\beta_1}\left(\mX_n\right),  
\]
and an exact integration of the Ornsterin-Uhlenbeck process: 
\[ 
\mX_{n+1}^N = O^h\tilde{\mX}_{n+1} +\bcm \bm{0}\\ \xi_n \ecm
\]
where $O^h = 
\bcm
\mbf{I} & \bm{0} \\
\bm{0} & \exp(-\gamma h )\mbf{I}
\ecm$.

The cost function is rewritten trajectory-wise as 
\begin{align*}
    \mathcal{E}_M(\theta) = \frac{1}{M}\sum_{m=1}^M \mathcal{E}_m
\end{align*}
where each summand function (superscript $m$ is omitted) is 
\begin{align*}
    \mathcal{E}_m(\theta)  &= \frac{1}{N_t} \sum_{i=0}^{N_t-1}\left\|\delta(F_\theta(\mX_{t_{i}},\xi_{t_i},\delta) - \mathcal{F}(\mX_{t_{i}},\mW_{[t_i,t_{i+1}]},\delta))\right\|^2_{\Sigma^{-1}} \\
    &= \frac{1}{N_t} \sum_{i=0}^{N_t-1}\left\|\mX^N_{t_{i+1}} - \mX_{t_{i+1}}\right\|^2_{\Sigma^{-1}}.
\end{align*}
Then, to compute the derivative of the cost function, it suffices to compute the derivative of each summand function, which is  
\begin{align*}
    \nabla_\theta \mathcal{E}_m(\theta)  &=  \frac{1}{N_t} \sum_{n=0}^{N_t-1} \left([\nabla_\theta(\mX_{t_{i+1}}^N)]^\top \Sigma^{-1} \left(\mX_{t_{i+1}}^N - \mX_{t_{i+1}} \right)\right) \\
    &=\frac{1}{N_t} \sum_{n=0}^{N_t-1} 2\left( \left(O^h\cdot J(b_1, \beta_1) \right)^\top\Sigma^{-1} \left(\mX_{t_{i+1}}^N - \mX_{t_{i+1}} \right)\right).
\end{align*}
Here $J(b_1, \beta_1)$ is the Jacobian of the symplectic integrator with respect to the parameters. It is computed directly as (recall the definition of $S^h_{b_1,\beta_1}$ in \eqref{eq:l1l2n} and $l_i$ in  \eqref{eq:nystrom2}) 
\begin{align*}
J(b_1, \beta_1) &= 
\bcm
\frac{\partial \tilde{\mbf{q}}_{n+1}}{ \partial b_1} &   \frac{\partial \tilde{\mbf{q}}_{n+1}}{ \partial \beta_1}  \\ 
\frac{\partial \tilde{\mbf{p}}_{n+1}}{ \partial b_1} &   \frac{\partial \tilde{\mbf{p}}_{n+1}}{ \partial \beta_1}
\ecm  \\
& = 
\bcm
 h^2\left(\beta_1\frac{\partial \ell_1}{\partial b_1}+\beta_2\frac{\partial \ell_2}{\partial b_1}\right)   &  h^2\left(\ell_1-\ell_2 + \beta_1 \frac{\partial \ell_1}{\partial \beta_1}+\beta_2\frac{\partial \ell_2}{\partial \beta_1} \right)   \\
 h\left(\ell_1 - \ell_2 + b_1 \frac{\partial \ell_1}{\partial b_1} + b_2\frac{\partial \ell_2}{\partial b_1} \right) & h\left(b_1\frac{\partial \ell_1}{\partial \beta_1} + b_2\frac{\partial \ell_2}{\partial \beta_1}\right)
\ecm
\end{align*}
where $\frac{\partial \ell_1}{\partial b_1} = \nabla\ell_1 \frac{\beta_1 h \mbf{p}_n }{b_1^2}$, $\frac{\partial \ell_1}{\partial \beta_1}  = -\nabla\ell_1 \frac{h \mbf{p}_n }{b_1}$, and 
\begin{align*}
\frac{\partial \ell_2}{\partial b_1} &= \nabla\ell_2\cdot \left( -h \mbf{p}_n \frac{\beta_2}{b_2^2} + h^2\frac{\beta_1b_2 - b_1\beta_2}{b_2} \frac{\partial \ell_1}{\partial b_1} -h^2 \ell_1 \frac{\beta_2}{b_2^2} \right) \\ 
& = \nabla\ell_2\cdot \left( -h \mbf{p}_n \frac{\beta_2}{b_2^2} + h^2\frac{\beta_1b_2 - b_1\beta_2}{b_2} \frac{\beta_1 h }{b_1^2}  \nabla\ell_1\cdot \mbf{p}_n - h^2 \ell_1 \frac{\beta_2}{b_2^2} \right), \\
\frac{\partial \ell_2}{\partial \beta_1} & = \nabla\ell_2 \cdot\left( h \mbf{p}_n \frac{1}{b_2} + h^2\frac{\beta_1b_2 - b_1\beta_2}{b_2} \frac{\partial \ell_1}{\partial \beta_1} + h^2 \ell_1 \frac{1}{b_2} \right) \\
& =\nabla\ell_2 \cdot \left( h \mbf{p}_n \frac{1}{b_2} - h^2\frac{\beta_1b_2 - b_1\beta_2}{b_2} \frac{h  }{b_1}  \nabla\ell_1 \cdot \mbf{p}_n + h^2 \ell_1 \frac{1}{b_2} \right). 
\end{align*}
Here $\nabla \ell_1$ and $\nabla \ell_2$ are
\begin{align*}
\begin{split}
&\nabla\ell_1 = \nabla g(\mbf{q}_n + hc_1 \mbf{p}_n), \\
&\nabla \ell_2 = \nabla g(\mbf{q}_n + hc_2 \mbf{p}_n + h^2a_{21}\ell_1).
\end{split}
\end{align*}

\bibliographystyle{plain}
{\small
\bibliography{ref_LearnDiscretization,ref_FeiLU2022_4}
}

\end{document}